\documentclass[12pt]{article}

\usepackage{amsmath,amssymb,amsfonts}
\usepackage{amsthm}
\usepackage{mathrsfs}
\usepackage{mathtools}
\usepackage{bm}
\usepackage{enumitem}
\usepackage{xcolor}
\usepackage{microtype,hyperref}
\usepackage[margin=1in]{geometry}

\usepackage[numbers,sort&compress]{natbib}

\theoremstyle{thmstyleone}
\newtheorem{theorem}{Theorem}[section]
\newtheorem{proposition}[theorem]{Proposition}
\newtheorem{lemma}[theorem]{Lemma}

\theoremstyle{thmstyletwo}
\newtheorem{remark}[theorem]{Remark}

\newtheorem{problem}[theorem]{Problem}

\numberwithin{equation}{section}

\theoremstyle{thmstylethree}

\usepackage{enumitem,float,hyperref,pgfplots,bm}
\usepackage{xcolor}
\usepgfplotslibrary{groupplots}
\pgfplotsset{compat=1.18}

\newcommand{\I}{\bm I}
\newcommand{\dd}{\,\mathrm{d}}
\newcommand{\ds}{\,\mathrm{d}s}
\newcommand{\dx}{\,\mathrm{d}x}

\newcommand{\R}{\mathbb{R}}

\newcommand{\Def}{\mathbf D}

\newcommand{\ip}[2]{\left(#1,#2\right)}
\newcommand{\dual}[2]{\left\langle #1,#2\right\rangle}

\DeclareMathOperator{\tr}{tr}
\DeclareMathOperator{\dev}{dev}
\DeclareMathOperator{\Div}{div}

\usepackage{array}
\usepackage{booktabs}
\newcolumntype{M}[1]{>{\centering\arraybackslash}m{#1}}

\usepackage{charter}
\usepackage{tikz}
\usepackage{xparse}

\begin{document}

\begin{center} 
\MakeUppercase{\bf \Large A four-field auxiliary reformulation of a} \\[.18cm]
\MakeUppercase{\bf \Large Cahn--Hilliard time step: Analysis and} \\[.18cm]
\MakeUppercase{\bf \Large conforming finite element discretization}
\end{center}

\begin{center}
{\sc Marvin Fritz} \\[.2cm]
\small Faculty of Mathematics, University of Vienna, Vienna, Austria
\end{center}

\begin{quote}\small
\textsc{Abstract.} We propose a four-field auxiliary reformulation of a time-discrete
Cahn--Hilliard step.
The construction is motivated by the scalar trace structure underlying
two-dimensional Rafetseder--Zulehner decompositions and represents the scalar
quantity $-\Delta c$ in the form $2p+\Div\bm u$ through an auxiliary scalar field
$p$ and an auxiliary vector field $\bm u$.
The resulting auxiliary problem is a mixed second-order Stokes/elasticity-type
system, while the evolution equation for the phase field retains its standard
mass-conserving gradient-flow structure.
We derive a continuous four-field
formulation that is equivalent to the classical convex-splitting mixed time step.
We also state a conforming finite element discretization and prove one-step
spatial estimates for the phase-field variables together with a stable auxiliary
block estimate containing an explicit weak-Laplacian recovery defect.
The numerical experiments verify
the expected phase-field convergence in a manufactured setting, mass
conservation, energy decay, and projected consistency of the auxiliary block.
\end{quote}

\section{Introduction}\label{sec:introduction}

The Cahn--Hilliard equation is a classical diffuse-interface model for phase
separation in binary mixtures \cite{cahn1958free}.
In its standard form, it is written as a second-order system for the phase field
$c$ and the chemical potential $\mu$.
\[
  \partial_t c = m\,\Delta\mu,
  \qquad
  \mu = -\varepsilon^2\Delta c + f'(c),
\]
where $m>0$ denotes the mobility, $\varepsilon>0$ is the interfacial parameter,
and $f$ is a bulk free-energy density.
Eliminating the chemical potential yields a fourth-order parabolic equation whose
principal part is biharmonic.
A direct weak treatment of that fourth-order equation naturally leads to
$H^2$-regularity and therefore suggests $C^1$-conforming discretizations \cite{brunk2026high}.

For biharmonic-type operators, several alternatives to conforming $C^1$ methods
are available, including discontinuous Galerkin methods, $C^0$ interior penalty
methods, and mixed formulations with auxiliary variables; see, for instance,
\cite{ciarletraviart1974,brenner2012c0,brennermonksun2015}.
Among the mixed approaches, the Rafetseder--Zulehner framework provides a
representation of the bending-moment tensor through one scalar and one vector
field, thereby reducing the problem to a sequence of second-order
subproblems; see
\cite{rafetseder2018decomposition,rafetseder2019new,kosin2023new}.
This point of view is particularly attractive because it requires only $H^1$-type
regularity for the auxiliary variables and has proved effective even in settings
where nontrivial boundary coupling must be treated explicitly
\cite{rafetseder2019new,kosin2023new}.
At the same time, it is well known that naive decouplings of biharmonic
problems may fail on nonconvex domains, so the precise scalar trace relation is a
structural issue rather than a cosmetic one \cite{zhangzhang2008invalidity}.

In the numerical analysis of the Cahn--Hilliard equation, a large literature is
available for standard finite element discretizations, mixed methods, and
energy-stable time discretizations.
Classical finite element studies go back at least to Elliott and French
\cite{elliottfrench1987}, with further analysis for logarithmic free energies in
\cite{copettielliott1992}.
Second-order splitting strategies were studied in
\cite{elliottfrenchmilner1989}, and Eyre's convex-splitting idea
\cite{eyre1998} has become one of the standard references for unconditionally
gradient-stable time marching.
Further important developments include finite element methods for degenerate
mobilities \cite{barrettbloweygarcke1999}, nonlocal diffusion
\cite{brunk2025analysis}, mixed finite element error analysis
\cite{fengprohl2004}, and discontinuous Galerkin discretizations
\cite{kaystylessuli2009}; see also the broad numerical overviews for Allen--Cahn and
Cahn--Hilliard equations \cite{shenyang2010,brunk2026review}.

The present paper develops a four-field auxiliary reformulation of one
time-discrete Cahn--Hilliard step that employs the two-dimensional
scalar trace structure associated with Rafetseder--Zulehner-type decompositions.
We retain the standard mass-conserving evolution equation and represent
the elliptic quantity $-\Delta c$ through an explicit auxiliary mixed block.
This leads to four unknowns
\(
  (c,\mu,p,\bm u)
\)
where the pair $(p,\bm u)$ describes the Laplacian contribution through the
identity
\(
  -\Delta c = 2p + \Div\bm u
\).
We exhibit a rigorous, gauge-fixed auxiliary block of Stokes/elasticity type
that explicitly represents the scalar Laplacian contribution using only
$H^1$-conforming spaces, while leaving the standard $H^{-1}$-gradient-flow
evolution equation unchanged.
In particular, the construction is compatible with the classical
convex-splitting time step without altering the underlying phase-field update.
Although the analysis below is presented for a first-order backward Euler
convex-splitting step, the auxiliary block is independent of this particular
time discretization; its extension to second-order convex-splitting variants is
discussed in Remark~\ref{rem:second_order_extensions}.

This auxiliary representation is useful at two levels.
First, it gives a mathematically explicit and fully standard mixed formulation
for the scalar higher-order contribution, avoiding abstract quotient spaces and
boundary multipliers.
Second, it provides a natural structural starting point for possible extensions
in which one may wish to postprocess the Laplacian contribution, derive
auxiliary-block-based estimators, or study more general higher-order,
anisotropic, or coupled Cahn--Hilliard-type models, such as source-driven or
poroelastic variants \cite{fritz2026structure,fritz2024well,brunk2025forch}, in
which tensorial second-derivative information plays a more central role.

The paper is organized as follows.
In Section~\ref{sec:model}, we recall the Cahn--Hilliard model, the convex-splitting
time step, and the standard mixed weak formulation.
Section~\ref{sec:fourfield} introduces the RZ-type auxiliary block for the
scalar quantity $-\Delta c$ and establishes its well-posedness.
Further, we combine this block with the classical mixed
time step and obtain an explicit four-field reformulation.
Section~\ref{sec:disc} presents the corresponding conforming finite element
discretization.
Section~\ref{sec:wellposed} proves the equivalence of the four-field system with the
classical convex-splitting time step on the natural regularity class.
Finally, Section~\ref{sec:apriori} derives one-step phase-field estimates and a
conditional auxiliary-block estimate with an explicit weak-Laplacian recovery
defect, and Section~\ref{sec:numerics} presents manufactured and transient
numerical tests.

\section{Model problem and classical time discretization}\label{sec:model}

Throughout the paper, let $\Omega\subset\R^2$ be a bounded, connected,
convex polygonal domain with boundary $\partial\Omega$ and outer unit normal $n$.
We restrict the discussion to two space dimensions because the planar
Rafetseder--Zulehner decomposition is the underlying structural motivation.

\subsection{The Cahn--Hilliard system}

We consider the Cahn--Hilliard system
\begin{equation}\label{eq:CH_system}
  \partial_t c = m\,\Delta\mu,
  \qquad
  \mu = -\varepsilon^2\Delta c + f'(c)
  \qquad \text{in }\Omega\times(0,T],
\end{equation}
where $m>0$ is a constant mobility, $\varepsilon>0$ is the interfacial
parameter, and $f$ is a bulk potential.
We impose homogeneous no-flux boundary conditions
\begin{equation}\label{eq:CH_bc}
  \partial_n c = 0,
  \qquad
  \partial_n \mu = 0
  \qquad \text{on }\partial\Omega\times(0,T].
\end{equation}
Testing the first equation in \eqref{eq:CH_system} with the constant function $1$
shows conservation of mass:
\begin{equation}\label{eq:mass_cont}
  \frac{\mathrm d}{\mathrm dt}\int_\Omega c(x,t)\,\dd x = 0.
\end{equation}

\subsection{One backward Euler step}

Let
\(
  0=t_0<t_1<\dots<t_N=T
\)
be a partition of the time interval with time step
\(
  \tau=t_{n+1}-t_n.
\)

\begin{problem}
Given $c^n\approx c(\cdot,t_n)$, the backward Euler step reads
\begin{equation}\label{eq:BE}
  \frac{c^{n+1}-c^n}{\tau} = m\,\Delta\mu^{n+1},
  \qquad
  \mu^{n+1} = -\varepsilon^2\Delta c^{n+1} + f'(c^{n+1})
  \qquad \text{in }\Omega,
\end{equation}
with
\begin{equation}\label{eq:BE_bc}
  \partial_n c^{n+1}=0,
  \qquad
  \partial_n \mu^{n+1}=0
  \qquad \text{on }\partial\Omega.
\end{equation}
\end{problem}
Again, testing the first equation with $1$ yields
\begin{equation}\label{eq:mass_disc}
  \int_\Omega c^{n+1}\,\dd x = \int_\Omega c^n\,\dd x.
\end{equation}

\subsection{Convex-splitting variant}

Assume that the potential admits a convex splitting
\(
  f = f_c - f_e
\)
where $f_c,f_e\in C^1(\R)$ are convex.
Then the standard convex-splitting time step replaces the constitutive relation
in \eqref{eq:BE} by
\begin{equation}\label{eq:CS_constitutive}
  \mu^{n+1}
  =
  -\varepsilon^2\Delta c^{n+1}
  + f_c'(c^{n+1})
  - f_e'(c^n).
\end{equation}
For compactness, we write
\begin{equation}\label{eq:Gn}
  G^n(c):=
  f_c'(c)-f_e'(c^n).
\end{equation}
Set
\(
  V:=H^1(\Omega).
\)
Given $c^n\in H^1(\Omega)$, the classical weak mixed formulation of one
convex-splitting step is:

\begin{problem}
Find $(c,\mu)\in V\times V$ such that
\begin{align}
  \ip{\frac{c-c^n}{\tau}}{v}
  + m\,\ip{\nabla\mu}{\nabla v}
  &= 0
  &&\forall\,v\in V,
  \label{eq:classical1}\\[1mm]
  \ip{\mu}{w}
  - \varepsilon^2\ip{\nabla c}{\nabla w}
  - \ip{G^n(c)}{w}
  &= 0
  &&\forall\,w\in V.
  \label{eq:classical2}
\end{align}
\end{problem}
For later use, define the mean value
\[
  \bar c^n:=\frac{1}{|\Omega|}\int_\Omega c^n\,\dd x
\]
and the affine mass-constrained space
\begin{equation}\label{eq:Vcbar}
  V_{\bar c^n}
  :=
  \left\{
    v\in H^1(\Omega):
    \frac{1}{|\Omega|}\int_\Omega v\,\dd x=\bar c^n
  \right\}.
\end{equation}

\begin{remark}[On second-order convex-splitting time discretizations]
\label{rem:second_order_extensions}
The present paper is restricted to a single first-order backward Euler
convex-splitting step in order to isolate the auxiliary reformulation of the
elliptic Cahn--Hilliard contribution. The auxiliary construction itself,
however, is not tied to the backward Euler time discretization. It only uses
that the constitutive relation contains an implicit elliptic term of the form
\(
  -\varepsilon^2\Delta c^{n+1}.
\)
For example, in a BDF2 convex-splitting scheme one may replace the first
equation by
\[
  \ip{\frac{3c^{n+1}-4c^n+c^{n-1}}{2\tau}}{v}
  +m\ip{\nabla\mu^{n+1}}{\nabla v}=0,
\]
and use a second-order extrapolated explicit part in the chemical potential,
for instance
\[
  \mu^{n+1}
  =
  -\varepsilon^2\Delta c^{n+1}
  +f_c'(c^{n+1})
  -\bigl(2f_e'(c^n)-f_e'(c^{n-1})\bigr),
\]
or another energy-stable second-order convex splitting. The four-field
replacement is then obtained by the same substitution
\[
  -\Delta c^{n+1}=2p^{n+1}+\Div\bm u^{n+1},
\]
and the auxiliary block is unchanged.
Thus the equivalence argument and the auxiliary-block well-posedness extend
formally to second-order convex-splitting, Crank--Nicolson, or BDF2 variants
whenever the corresponding standard \((c,\mu)\)-scheme is well posed and energy
stable. Extending the full error analysis to such schemes would mainly
require combining the known second-order convex-splitting estimates with the
same auxiliary recovery estimate used below. We therefore regard this as a
natural extension of the present structural reformulation. Representative second-order energy-stable or
convex-splitting Cahn--Hilliard discretizations include mixed finite element,
finite-difference, nonlocal, and pseudo-spectral variants; see, for example,
\cite{diegelwangwise2016,guowangwiseyue2016,guanlowengrubwangwise2014,
chengwangwiseyue2016,shenyang2010}.
\end{remark}

\section{The continuous four-field reformulation}\label{sec:fourfield}

In this section, we formulate the continuous four-field reformulation of the Cahn--Hilliard equation. We begin by isolating the scalar quantity $-\Delta c$ and representing it by an
explicit auxiliary mixed block.
The construction is motivated by the two-dimensional scalar trace
structure associated with Rafetseder--Zulehner-type decompositions \cite{rafetseder2018decomposition,rafetseder2019new}: a scalar
second-derivative contribution is represented through a pair $(p,\bm u)$ by a
quantity of the form $2p+\Div\bm u$.
In the Cahn--Hilliard application, we apply this structure to the scalar
source
\(
  r_c:=-\Delta c
\)
so that the represented quantity is
\(
  -\Delta c = 2p+\Div\bm u
\).

\subsection{Basic identities}

The auxiliary block introduced below provides a Stokes/elasticity-type
representation of the scalar quantity $-\Delta c$.
Its trace structure is reminiscent of the
Rafetseder--Zulehner decomposition of the Hessian in two space dimensions; we
therefore refer to it as an \emph{RZ-type} block to emphasize the structural
analogy.

Let
\[
  Q_0:=L^2_0(\Omega)
  :=
  \left\{
    q\in L^2(\Omega): \int_\Omega q\,\dd x = 0
  \right\},
  \qquad
  \bm U:=[H^1_0(\Omega)]^2.
\]
For $\bm v=(v_1,v_2)$, let
\(
  \Def\bm v
  :=
  \frac12\bigl(\nabla\bm v + (\nabla\bm v)^T\bigr)
\)
denote the symmetric gradient.
For a symmetric matrix field $\bm A$ in two dimensions, we write
\[
  \dev\bm A := \bm A-\frac12(\tr\bm A)\I
\]
for its trace-free part.
Further, we introduce the rotation matrix
\[
  H:=
  \begin{pmatrix}
    0 & -1\\
    1 & \phantom{-}0
  \end{pmatrix}.
\]
For $(p,\bm u)\in Q_0\times \bm U$ define the associated symmetric tensor
\begin{equation}\label{eq:RZ_tensor}
  \mathcal M(p,\bm u)
  :=
  p\,\I + H^T(\Def\bm u)H.
\end{equation}
Since orthogonal conjugation preserves symmetry and the trace, we have
\begin{equation}\label{eq:trace_M}
  \tr \mathcal M(p,\bm u)
  =
  2p + \Div\bm u.
\end{equation}

\subsection{The explicit Neumann-compatible auxiliary block}

Given a scalar source $r\in Q_0$, we define $(p,\bm u)$ by the mixed problem:

\begin{problem}
Find $(p,\bm u)\in Q_0\times\bm U$ such that
\begin{align}
  \ip{2p+\Div\bm u}{q}
  &= \ip{r}{q}
  &&\forall\,q\in Q_0,
  \label{eq:RZblock1}\\[1mm]
  \ip{\Def\bm u}{\Def\bm z}
  + \ip{p}{\Div\bm z}
  &= 0
  &&\forall\,\bm z\in \bm U.
  \label{eq:RZblock2}
\end{align}
\end{problem}

This is a standard mixed elasticity/Stokes-type system.
The first equation prescribes the scalar quantity $2p+\Div\bm u$, while the
second equation selects the unique vector potential fixed by the homogeneous
Dirichlet gauge.

\begin{proposition}[Well-posedness of the auxiliary block]\label{prop:RZblock}
For every $r\in Q_0$, there exists a unique pair
\(
  (p,\bm u)\in Q_0\times \bm U
\)
satisfying \eqref{eq:RZblock1}--\eqref{eq:RZblock2}.
Moreover, there exists a constant $C>0$, depending only on $\Omega$, such that
\begin{equation}\label{eq:RZblock_stab}
  \|p\|_{L^2(\Omega)} + \|\bm u\|_{H^1(\Omega)}
  \le C\,\|r\|_{L^2(\Omega)}.
\end{equation}
\end{proposition}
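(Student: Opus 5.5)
The plan is to rewrite \eqref{eq:RZblock1}--\eqref{eq:RZblock2} as a single variational problem on the product Hilbert space $\bm U\times Q_0$ with a \emph{coercive} bilinear form, and then apply Lax--Milgram. Taken literally, the system is a perturbed saddle-point problem with the "wrong" sign. Testing with $(\bm z,q)=(\bm u,p)$ and subtracting gives $2\|p\|^2-\|\Def\bm u\|^2=\ip{r}{p}$, which is indefinite. The remedy is to change the sign of the pressure-type variable and of the first equation.

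Set $\tilde p:=-p$. Then \eqref{eq:RZblock2} becomes $\ip{\Def\bm u}{\Def\bm z}-\ip{\tilde p}{\Div\bm z}=0$. Multiplying \eqref{eq:RZblock1} by $-1$ gives $-\ip{\Div\bm u}{q}+2\ip{\tilde p}{q}=-\ip{r}{q}$. Adding the two, I define on $X:=\bm U\times Q_0$ the form
\[
  \mathcal B\bigl((\bm u,\tilde p),(\bm z,q)\bigr)
  :=\ip{\Def\bm u}{\Def\bm z}-\ip{\tilde p}{\Div\bm z}
   -\ip{\Div\bm u}{q}+2\ip{\tilde p}{q}.
\]
The claim to check is that $(p,\bm u)$ solves the block if and only if $\mathcal B((\bm u,\tilde p),(\bm z,q))=-\ip{r}{q}$ for all $(\bm z,q)\in X$. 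This holds because the two equations can be recovered by choosing $q=0$ and $\bm z=0$ separately. The form $\mathcal B$ is not symmetric, but that does not matter: taking $(\bm z,q)=(\bm u,\tilde p)$, the two off-diagonal terms do not cancel as written. I would therefore rather keep them antisymmetric, i.e. use $+\ip{\Div\bm u}{q}$ in place of $-\ip{\Div\bm u}{q}$ by testing the first equation with $-q$ only on the $r$-side. Concretely, I will choose the signs so that the coupling terms form $-b(\bm z,\tilde p)+b(\bm u,q)$. Then $\mathcal B((\bm u,\tilde p),(\bm u,\tilde p))=\|\Def\bm u\|^2+2\|\tilde p\|^2$, and the equivalence with \eqref{eq:RZblock1}--\eqref{eq:RZblock2} still holds, again by decoupled testing. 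Fixing these signs correctly is the only delicate bookkeeping step.

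Boundedness of $\mathcal B$ on $X$ is immediate from Cauchy--Schwarz and $\|\Div\bm z\|\le\sqrt2\|\nabla\bm z\|$. Coercivity follows from Korn's inequality on $[H^1_0(\Omega)]^2$, namely $\|\nabla\bm u\|^2=2\|\Def\bm u\|^2-\|\Div\bm u\|^2\le 2\|\Def\bm u\|^2$, combined with Poincar\'e's inequality to control the full $H^1$ norm. These give $\mathcal B(x,x)\ge\alpha\|x\|_X^2$ with $\alpha$ depending only on $\Omega$. The right-hand side $(\bm z,q)\mapsto-\ip{r}{q}$ is bounded by $\|r\|\,\|q\|$. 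Lax--Milgram then yields existence and uniqueness of $(\bm u,\tilde p)$, hence of $(p,\bm u)$. Note that no inf-sup condition is needed, because the term $2\ip{p}{q}$ controls $p$ directly.

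For the estimate \eqref{eq:RZblock_stab}, I test with the solution itself and get $\|\Def\bm u\|^2+2\|p\|^2\le\|r\|\,\|p\|$. This gives $\|p\|\le\tfrac12\|r\|$ and $\|\Def\bm u\|^2\le\tfrac12\|r\|^2$. Korn and Poincar\'e then give $\|\bm u\|_{H^1}\le C\|r\|$, which completes the bound.
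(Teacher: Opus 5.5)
There is a genuine gap, and it sits in the step you call ``the only delicate bookkeeping step'': the sign choice you want does not exist. Flip the $\bm z$-equation by $s_1$, the $q$-equation by $s_2$, $\bm u$ by $t_1$ and $p$ by $t_2$ ($s_i,t_j\in\{\pm1\}$). Then the terms $\ip{\Def\bm u}{\Def\bm z}$, $2\ip{p}{q}$, $\ip{p}{\Div\bm z}$, $\ip{\Div\bm u}{q}$ pick up the factors $s_1t_1$, $s_2t_2$, $s_1t_2$, $s_2t_1$. A positive diagonal forces $s_1t_1=s_2t_2=1$, hence $s_1t_2=s_2t_1$. So the coupling is always symmetric and contributes $\pm2\ip{\Div\bm u}{p}$ to $\mathcal B(x,x)$; it never cancels.

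The form you finally use has $q$-part $2\ip{\tilde p}{q}+\ip{\Div\bm u}{q}=\ip{\Div\bm u-2p}{q}$, whereas \eqref{eq:RZblock1} involves $\ip{2p+\Div\bm u}{q}$. Decoupled testing therefore recovers a different equation, and Lax--Milgram solves a different problem. Its conclusion $\|p\|\le\tfrac12\|r\|$ is in fact false for the actual block. From \eqref{eq:RZblock1} we get $p=\tfrac12(r-\Div\bm u)$, and testing \eqref{eq:RZblock2} with $\bm z=\bm u$ gives $\|\dev\Def\bm u\|^2=-\tfrac12\ip{r}{\Div\bm u}$. Hence
\[
  \|p\|^2=\tfrac14\|r\|^2+\|\dev\Def\bm u\|^2+\tfrac14\|\Div\bm u\|^2 ,
\]
so $\|p\|>\tfrac12\|r\|$ for every $r\neq0$. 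Indeed, equality would force $\bm u=0$ and $p=r/2$, and then \eqref{eq:RZblock2} gives $\nabla r=0$, i.e.\ $r=0$.

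Your starting diagnosis is also off. Adding, instead of subtracting, the tested equations gives the symmetric form $\mathcal A$ that the paper introduces in Section~\ref{sec:disc}, with $\mathcal A(x,x)=\|\Def\bm u\|^2+2\ip{\Div\bm u}{p}+2\|p\|^2=\|\dev\Def\bm u\|^2+\tfrac12\|2p+\Div\bm u\|^2$. The coupling cannot be cancelled, only absorbed. Minimizing over $p$ (at $p=-\tfrac12\Div\bm u\in Q_0$) leaves exactly $\|\dev\Def\bm u\|^2$. So coercivity is equivalent to the trace-free Korn inequality \eqref{Eq:Korn}, and the bound $\|\nabla\bm u\|^2\le2\|\Def\bm u\|^2$, which throws away $\|\Div\bm u\|^2$, cannot deliver it.

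The repair is short, because the identity you wrote, $\|\nabla\bm u\|^2=2\|\Def\bm u\|^2-\|\Div\bm u\|^2$ on $[H^1_0(\Omega)]^2$, is precisely $\|\nabla\bm u\|^2=2\|\dev\Def\bm u\|^2$. With Poincar\'e and $2\|p\|\le\|2p+\Div\bm u\|+\|\Div\bm u\|$, this makes $\mathcal A$ coercive on $Q_0\times\bm U$. Lax--Milgram then gives existence and uniqueness, and the stability constant is the inverse coercivity constant, not a direct bound $\|p\|\le\tfrac12\|r\|$.

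That product-space route is a valid alternative to the paper's proof. The paper instead eliminates $p=\tfrac12(r-\Div\bm u)$ and applies Lax--Milgram to the reduced form $a(\bm u,\bm z)=\ip{\Def\bm u}{\Def\bm z}-\tfrac12\ip{\Div\bm u}{\Div\bm z}$, with $a(\bm v,\bm v)=\|\dev\Def\bm v\|^2$. Both hinge on the same trace-free Korn estimate.
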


\begin{proof}
Since $\bm u\in [H^1_0(\Omega)]^2$, we have
\[
  \int_\Omega \Div\bm u\,\dx
  =
  \int_{\partial\Omega}\bm u\cdot n\,\ds
  =
  0.
\]
Hence $\Div\bm u\in Q_0$. The first equation in
\eqref{eq:RZblock1}--\eqref{eq:RZblock2} is therefore equivalent to
\[
  p=\frac12(r-\Div\bm u)
  \qquad\text{in }L^2(\Omega).
\]
Substituting this identity into \eqref{eq:RZblock2} gives the following problem
for $\bm u$ alone:
\begin{equation}\label{eq:reduced_u_problem}
  a(\bm u,\bm z)
  =
  -\frac12\ip{r}{\Div\bm z}
  \qquad \forall\,\bm z\in\bm U,
\end{equation}
where
\[
  a(\bm u,\bm z)
  :=
  \ip{\Def\bm u}{\Def\bm z}
  -\frac12\ip{\Div\bm u}{\Div\bm z}.
\]
Using the two-dimensional trace decomposition
\[
  |\Def\bm v|^2
  =
  |\dev\Def\bm v|^2
  +\frac12|\Div\bm v|^2,
\]
we obtain
\[
  a(\bm v,\bm v)
  =
  \|\dev\Def\bm v\|_{L^2(\Omega)}^2
  \qquad \forall\,\bm v\in\bm U.
\]
We use the following Korn inequality for the trace-free symmetric gradient on
$[H^1_0(\Omega)]^2$, compare \cite[Theorem 6.3-3]{ciarlet1994three}: there exists a constant $c_{\rm dev}>0$, depending only on
$\Omega$, such that
\begin{equation} \label{Eq:Korn}
  \|\dev\Def\bm v\|_{L^2(\Omega)}
  \ge
  c_{\rm dev}\|\bm v\|_{H^1(\Omega)}
  \qquad \forall\,\bm v\in\bm U .
\end{equation}
The zero-trace condition is essential here. In two space dimensions the kernel
of $\dev\Def$ on unconstrained $H^1$ spaces contains the conformal Killing
vector fields, so no such estimate holds on all of $[H^1(\Omega)]^2$. On
$[H^1_0(\Omega)]^2$, however, the trace condition removes this kernel, and the
trace-free Korn inequality gives the stated coercivity; see also
\cite{dain2006,geymonatsuquet1986} for further details.
Thus $a$ is coercive on $\bm U$. It is also continuous, and the right-hand side
in \eqref{eq:reduced_u_problem} satisfies
\[
  \frac12\left|\ip{r}{\Div\bm z}\right|
  \le
  C\|r\|_{L^2(\Omega)}\|\bm z\|_{H^1(\Omega)}.
\]
The Lax--Milgram lemma therefore gives a unique
$\bm u\in\bm U$ satisfying \eqref{eq:reduced_u_problem}, with
\[
  \|\bm u\|_{H^1(\Omega)}
  \le
  C\|r\|_{L^2(\Omega)}.
\]
Defining
\[
  p:=\frac12(r-\Div\bm u)
\]
then gives
\[
  \|p\|_{L^2(\Omega)}
  \le
  \frac12\|r\|_{L^2(\Omega)}
  + C\|\bm u\|_{H^1(\Omega)}
  \le
  C\|r\|_{L^2(\Omega)}.
\]
The pair $(p,\bm u)$ satisfies \eqref{eq:RZblock1}--\eqref{eq:RZblock2} by
construction.

Finally, if $r=0$, then the reduced problem gives
$a(\bm u,\bm u)=0$, hence $\dev\Def\bm u=0$. The Korn--deviator inequality implies
$\bm u=0$, and then $p=0$. This proves uniqueness and the stability estimate
\eqref{eq:RZblock_stab}. 
\end{proof}

\begin{remark}
The constant in \eqref{eq:RZblock_stab} depends on $\Omega$ through the
Korn--deviator inequality and the usual Sobolev constants.
The space $\bm U=[H^1_0(\Omega)]^2$ is a concrete gauge-fixed choice.
It removes the rigid-motion ambiguity from the classical RZ potential and is
particularly natural in the present homogeneous Neumann setting.
\end{remark}

\subsection{Representation of \texorpdfstring{$-\Delta c$}{-Delta c}}

The role of the block \eqref{eq:RZblock1}--\eqref{eq:RZblock2} in the
Cahn--Hilliard context is to represent the scalar quantity $-\Delta c$.
Whenever
\(
  c\in H^2(\Omega)\) with
  $\partial_n c = 0$ on $\partial\Omega$,
the function
\(
  r_c:=-\Delta c
\)
belongs to $Q_0$ because
\[
  \int_\Omega r_c\,\dx
  =
  -\int_\Omega \Delta c\,\dx
  =
  -\int_{\partial\Omega}\partial_n c\,\ds
  =
  0.
\]
Applying Proposition~\ref{prop:RZblock} to $r_c$ yields a unique pair
\(
  (p(c),\bm u(c))\in Q_0\times\bm U
\)
such that
\begin{align}
  \ip{2p(c)+\Div\bm u(c)}{q}
  &= \ip{-\Delta c}{q}
  &&\forall\,q\in Q_0,
  \label{eq:rc1}\\[1mm]
  \ip{\Def\bm u(c)}{\Def\bm z}
  + \ip{p(c)}{\Div\bm z}
  &= 0
  &&\forall\,\bm z\in \bm U.
  \label{eq:rc2}
\end{align}
In other words,
\begin{equation}\label{eq:laplace_repr}
  -\Delta c = 2p(c)+\Div\bm u(c)
  \qquad \text{in }L^2(\Omega).
\end{equation}
This identity is the scalar Rafetseder--Zulehner trace relation that replaces the
shortcut $-\Delta c=2p$.

\subsection{Four-field weak system}

We now combine the explicit auxiliary block with the classical mixed
Cahn--Hilliard time step.
The four-field formulation is stated on the class
\begin{equation}\label{eq:regularity_class}
  c\in H^2(\Omega),\qquad
  \partial_n c=0 \text{ on }\partial\Omega,\qquad
  \mu\in H^1(\Omega),\qquad
  p\in Q_0,\qquad
  \bm u\in \bm U.
\end{equation}
We also require the integrability condition
\begin{equation}\label{eq:G_L2_assumption}
  G^n(c)=f_c'(c)-f_e'(c^n)\in L^2(\Omega).
\end{equation}
Under \eqref{eq:regularity_class}--\eqref{eq:G_L2_assumption}, all terms in
\eqref{eq:4f1}--\eqref{eq:4f4} below are well defined. The problem reads:

\begin{problem} Given $c^n\in H^1(\Omega)$, we seek
\(
  (c,\mu,p,\bm u)\in V_{\bar c^n}\times H^1(\Omega)\times Q_0\times \bm U
\)
with $c\in H^2(\Omega)$, $\partial_n c=0$, and $G^n(c)\in L^2(\Omega)$ such that
\begin{align}
  \ip{\frac{c-c^n}{\tau}}{v}
  + m\,\ip{\nabla\mu}{\nabla v}
  &= 0
  &&\forall\,v\in H^1(\Omega),
  \label{eq:4f1}\\[1mm]
  \ip{\mu}{w}
  - \varepsilon^2 \ip{2p+\Div\bm u}{w}
  - \ip{G^n(c)}{w}
  &= 0
  &&\forall\,w\in L^2(\Omega),
  \label{eq:4f2}\\[1mm]
  \ip{2p+\Div\bm u}{q}
  &= \ip{-\Delta c}{q}
  &&\forall\,q\in Q_0,
  \label{eq:4f3}\\[1mm]
  \ip{\Def\bm u}{\Def\bm z}
  + \ip{p}{\Div\bm z}
  &= 0
  &&\forall\,\bm z\in \bm U.
  \label{eq:4f4}
\end{align}
\end{problem}

\begin{remark}
Equation \eqref{eq:4f2} is tested against $w\in L^2(\Omega)$ rather than
$w\in H^1(\Omega)$, so it expresses the constitutive identity
\[
  \mu = \varepsilon^2(2p+\Div\bm u)+G^n(c)
\]
as an identity in $L^2(\Omega)$.
This is consistent with \eqref{eq:regularity_class}--\eqref{eq:G_L2_assumption},
in which both $-\Delta c$ and $G^n(c)$ belong to $L^2(\Omega)$.
\end{remark}

\begin{remark}
Equation \eqref{eq:4f1} is the standard mass-conserving diffusion step.
Equation \eqref{eq:4f2} is the constitutive relation for the chemical potential,
but with the Laplacian contribution represented by the scalar quantity
$2p+\Div\bm u$.
Equations \eqref{eq:4f3}--\eqref{eq:4f4} are the explicit auxiliary block that
generates this scalar quantity from the phase field.
\end{remark}

\section{Conforming finite element discretization}\label{sec:disc}

We now derive the corresponding fully discrete scheme.
At the discrete level, the method uses only standard conforming $H^1$ finite
element spaces.

\subsection{Discrete spaces}

Let $\{\mathcal T_h\}_{h>0}$ be a shape-regular family of conforming
triangulations of $\Omega$.
For an integer $r\ge1$, let $\mathbb V_h^r\subset H^1(\Omega)$ denote the
standard conforming Lagrange finite element space of degree $r$.
We choose polynomial degrees
\[
  r_c\ge1,\qquad r_p\ge1,\qquad r_u\ge1,
\]
and define
\[
  V_h := \mathbb V_h^{r_c},
  \qquad
  Q_h := \mathbb V_h^{r_p}\cap L^2_0(\Omega),
  \qquad
  \bm U_h := [\mathbb V_h^{r_u}\cap H^1_0(\Omega)]^2.
\]
The phase field and chemical potential are both approximated in $V_h$.
We further set
\[
  V_{h,0}:=V_h\cap L^2_0(\Omega),
  \qquad
  V_{h,\bar c_h^n}
  :=
  \left\{
    v_h\in V_h:
    \frac1{|\Omega|}\int_\Omega v_h\,\dx=\bar c_h^n
  \right\}.
\]
For the equivalence with the standard conforming mixed Cahn--Hilliard step we
assume
\begin{equation}\label{eq:Q_contains_Vh0}
  V_{h,0}\subset Q_h.
\end{equation}
For the nested Lagrange spaces on the same triangulation used here, this
condition is satisfied whenever $r_p\ge r_c$. In particular, it holds for the
choice used in the numerical experiments, we use
$r_c=1$, $r_p=1$, and $r_u=2$.

We note that no additional discrete inf--sup condition is needed for the auxiliary block. The reason is
the coercivity of the symmetric bilinear form
\[
  \mathcal A\bigl((p,\bm u),(q,\bm z)\bigr)
  :=
  2\ip{p}{q}
  +\ip{\Div\bm u}{q}
  +\ip{p}{\Div\bm z}
  +\ip{\Def\bm u}{\Def\bm z}.
\]
Indeed,
\[
  \mathcal A\bigl((p,\bm u),(p,\bm u)\bigr)
  =
  \frac12\|2p+\Div\bm u\|_{L^2(\Omega)}^2
  +
  \|\dev\Def\bm u\|_{L^2(\Omega)}^2.
\]
The trace-free Korn inequality used in the proof of
Proposition~\ref{prop:RZblock} gives
\[
  \|\bm u\|_{H^1(\Omega)}
  \le C\|\dev\Def\bm u\|_{L^2(\Omega)}.
\]
Furthermore,
\[
  2\|p\|_{L^2(\Omega)}
  \le
  \|2p+\Div\bm u\|_{L^2(\Omega)}
  +\|\Div\bm u\|_{L^2(\Omega)}
  \le
  \|2p+\Div\bm u\|_{L^2(\Omega)}
  +C\|\bm u\|_{H^1(\Omega)}.
\]
Hence \(\mathcal A\) is coercive with respect to the product norm
\(\|p\|_{L^2(\Omega)}+\|\bm u\|_{H^1(\Omega)}\) on \(Q_0\times\bm U\), and
therefore also on \(Q_h\times\bm U_h\), with a mesh-independent constant.

\subsection{Discrete four-field step}

For compactness, we define
\(
  G_h^n(c_h):=f_c'(c_h)-f_e'(c_h^n)
\).
The fully discrete four-field step then reads:

\begin{problem} Given $c_h^n\in V_h$, find
\[
  (c_h,\mu_h,p_h,\bm u_h)
  \in
  V_{h,\bar c_h^n}\times V_h\times Q_h\times \bm U_h
\]
such that
\begin{align}
  \ip{\frac{c_h-c_h^n}{\tau}}{v_h}
  + m\,\ip{\nabla\mu_h}{\nabla v_h}
  &=0
  &&\forall\,v_h\in V_h,
  \label{eq:d4f1}\\[1mm]
  \ip{\mu_h}{w_h}
  - \varepsilon^2\ip{2p_h+\Div\bm u_h}{w_h}
  - \ip{G_h^n(c_h)}{w_h}
  &=0
  &&\forall\,w_h\in V_h,
  \label{eq:d4f2}\\[1mm]
  \ip{2p_h+\Div\bm u_h}{q_h}
  &= \ip{\nabla c_h}{\nabla q_h}
  &&\forall\,q_h\in Q_h,
  \label{eq:d4f3}\\[1mm]
  \ip{\Def\bm u_h}{\Def\bm z_h}
  + \ip{p_h}{\Div\bm z_h}
  &=0
  &&\forall\,\bm z_h\in \bm U_h.
  \label{eq:d4f4}
\end{align}
\end{problem}

\begin{remark}
The continuous auxiliary equation \eqref{eq:4f3} is written with the strong
quantity $-\Delta c$ because the scalar auxiliary space is
$Q_0=L^2_0(\Omega)$; thus the continuous test function $q\in Q_0$ need not
possess a gradient. At the discrete level, $c_h\in V_h\subset H^1(\Omega)$ is
only $H^1$-conforming, and the scalar quantity represented in $Q_h$ is therefore
the discrete weak Laplacian defined by
\[
  \ip{r_h}{q_h}=\ip{\nabla c_h}{\nabla q_h}
  \qquad \forall\,q_h\in Q_h.
\]
Equation \eqref{eq:d4f3} is precisely the corresponding Galerkin recovery,
written as
\[
  \ip{2p_h+\Div\bm u_h}{q_h}
  =
  \ip{\nabla c_h}{\nabla q_h}
  \qquad \forall\,q_h\in Q_h.
\]
We note that this is not a change of the continuous auxiliary space but the $C^0$ finite
element recovery of the scalar Laplacian contribution.
\end{remark}

\subsection{Discrete auxiliary block}

For later reference, we isolate the discrete auxiliary block.

\begin{problem}
    Given $r_h\in Q_h$, find $(p_h,\bm u_h)\in Q_h\times\bm U_h$ such that
\begin{align}
  \ip{2p_h+\Div\bm u_h}{q_h}
  &= \ip{r_h}{q_h}
  &&\forall\,q_h\in Q_h,
  \label{eq:disc_block1}\\[1mm]
  \ip{\Def\bm u_h}{\Def\bm z_h}
  + \ip{p_h}{\Div\bm z_h}
  &=0
  &&\forall\,\bm z_h\in \bm U_h.
  \label{eq:disc_block2}
\end{align}
\end{problem}

\begin{proposition}[Discrete well-posedness of the auxiliary block]
\label{prop:disc_RZblock}
For every $r_h\in Q_h$ there exists a unique
\(
  (p_h,\bm u_h)\in Q_h\times\bm U_h
\)
solving \eqref{eq:disc_block1}--\eqref{eq:disc_block2}.
Moreover, there exists a mesh-independent constant $C>0$ such that
\[
  \|p_h\|_{L^2(\Omega)}+\|\bm u_h\|_{H^1(\Omega)}
  \le C\|r_h\|_{L^2(\Omega)}.
\]
\end{proposition}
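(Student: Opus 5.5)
The plan is to use the coercivity of the symmetric bilinear form $\mathcal A$ shown just before the statement, and to conclude by Lax--Milgram on the finite-dimensional space $Q_h\times\bm U_h$. We cannot copy the continuous proof of Proposition~\ref{prop:RZblock} literally. There, equation \eqref{eq:RZblock1} was solved pointwise as $p=\frac12(r-\Div\bm u)$. At the discrete level $\Div\bm u_h$ need not lie in $Q_h$, so only $p_h=\frac12(r_h-\Pi_h\Div\bm u_h)$ holds, with $\Pi_h$ the $L^2$-projection onto $Q_h$. The reduced problem then involves $\Pi_h$, and its coercivity is less transparent. Working with the full block form avoids this.

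First, add \eqref{eq:disc_block1} and \eqref{eq:disc_block2}. A pair $(p_h,\bm u_h)$ solves the block exactly when
\[
  \mathcal A\bigl((p_h,\bm u_h),(q_h,\bm z_h)\bigr)=\ip{r_h}{q_h}
  \qquad\forall\,(q_h,\bm z_h)\in Q_h\times\bm U_h .
\]
One direction takes $\bm z_h=0$ and then $q_h=0$. The other direction simply adds the two equations.

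Next, the identity
\[
  \mathcal A\bigl((p,\bm u),(p,\bm u)\bigr)=\tfrac12\|2p+\Div\bm u\|_{L^2(\Omega)}^2+\|\dev\Def\bm u\|_{L^2(\Omega)}^2
\]
holds. To check it, expand $\frac12\|2p+\Div\bm u\|^2=2\|p\|^2+2\ip{p}{\Div\bm u}+\frac12\|\Div\bm u\|^2$ and use $|\Def\bm u|^2=|\dev\Def\bm u|^2+\frac12|\Div\bm u|^2$. Combined with the trace-free Korn inequality \eqref{Eq:Korn} and the triangle-inequality bound for $\|p\|$ displayed above, this gives a constant $\alpha>0$, depending only on $\Omega$, with
\[
  \mathcal A\bigl((p,\bm u),(p,\bm u)\bigr)\ge\alpha\bigl(\|p\|_{L^2(\Omega)}+\|\bm u\|_{H^1(\Omega)}\bigr)^2 .
\]
This uses $\|\dev\Def\bm u\|^2\ge c_{\rm dev}^2\|\bm u\|_{H^1}^2$ and $\|p\|^2\lesssim\|2p+\Div\bm u\|^2+\|\bm u\|_{H^1}^2$. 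The estimate holds on all of $Q_0\times\bm U$. Since $Q_h\subset Q_0$ and $\bm U_h\subset\bm U$ are conforming subspaces, the same $\alpha$ works on the discrete spaces, so it is mesh-independent.

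The form $\mathcal A$ is bounded on $L^2\times H^1$, and the functional $(q_h,\bm z_h)\mapsto\ip{r_h}{q_h}$ has norm at most $\|r_h\|_{L^2(\Omega)}$. Lax--Milgram (equivalently, injectivity of a square linear system) then yields existence and uniqueness. For the bound, test with $(p_h,\bm u_h)$ itself:
\[
  \alpha\bigl(\|p_h\|+\|\bm u_h\|_{H^1}\bigr)^2\le\|r_h\|\,\|p_h\|,
\]
which gives the estimate with $C=1/\alpha$. The only delicate step is making sure the coercivity constant does not degrade with $h$. This follows because the Korn inequality is a continuous statement on $[H^1_0(\Omega)]^2$ and is inherited by conforming subspaces, so no discrete inf--sup condition is needed.
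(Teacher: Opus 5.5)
Your proof is correct, but it is not the route the paper takes for this proposition. The paper mirrors the continuous proof of Proposition~\ref{prop:RZblock}. It solves \eqref{eq:disc_block1} as $p_h=\frac12(r_h-\Pi_{Q_h}\Div\bm u_h)$ and substitutes this into \eqref{eq:disc_block2}. That gives a reduced problem for $\bm u_h$ alone, with form $a_h(\bm u_h,\bm z_h)=\ip{\Def\bm u_h}{\Def\bm z_h}-\frac12\ip{\Pi_{Q_h}\Div\bm u_h}{\Pi_{Q_h}\Div\bm z_h}$.

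You said the projection makes coercivity of this reduced form less transparent, but there it costs one line. Since $\Pi_{Q_h}$ is an $L^2$-contraction, $a_h(\bm v_h,\bm v_h)\ge\|\Def\bm v_h\|^2-\frac12\|\Div\bm v_h\|^2=\|\dev\Def\bm v_h\|^2$. The Korn inequality and Lax--Milgram then give $\bm u_h$, and the bound on $p_h$ follows from its explicit formula.

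You instead apply Lax--Milgram to the full symmetric block form $\mathcal A$ on $Q_h\times\bm U_h$. You use the identity $\mathcal A((p,\bm u),(p,\bm u))=\frac12\|2p+\Div\bm u\|^2+\|\dev\Def\bm u\|^2$, which the paper records just before the statement and reuses for C\'ea's lemma in Theorem~\ref{thm:aux_error}. Your checks are all sound: the equivalence of the block with the summed variational problem, the expansion proving the identity, the coercivity constant $\alpha$, and the bound $C=1/\alpha$ obtained by testing with $(p_h,\bm u_h)$.

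Your version has one advantage. A single coercivity estimate, inherited unchanged by every conforming subspace, gives well-posedness, the stability bound, and the quasi-optimality needed later, with no projection involved. The paper's Schur-complement version has others. It keeps the argument parallel to the continuous case. It shows that the discrete reduced form dominates the continuous one, $a_h(\bm v_h,\bm v_h)\ge a(\bm v_h,\bm v_h)$. It also exhibits $p_h$ explicitly in terms of $r_h$ and $\bm u_h$.
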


\begin{proof}
Let $\Pi_{Q_h}:L^2_0(\Omega)\to Q_h$ denote the $L^2$-orthogonal projection.
The first equation is equivalent to
\[
  p_h=\frac12\left(r_h-\Pi_{Q_h}\Div\bm u_h\right).
\]
Substitution into the second equation gives the reduced problem
\[
  a_h(\bm u_h,\bm z_h)
  =
  -\frac12\ip{r_h}{\Div\bm z_h}
  \qquad \forall\,\bm z_h\in\bm U_h,
\]
with
\[
  a_h(\bm u_h,\bm z_h)
  :=
  \ip{\Def\bm u_h}{\Def\bm z_h}
  -\frac12\ip{\Pi_{Q_h}\Div\bm u_h}{\Pi_{Q_h}\Div\bm z_h}.
\]
Since $\Pi_{Q_h}$ is an $L^2$-orthogonal projection,
\[
  a_h(\bm v_h,\bm v_h)
  \ge
  \|\Def\bm v_h\|_{L^2(\Omega)}^2
  -\frac12\|\Div\bm v_h\|_{L^2(\Omega)}^2
  =
  \|\dev\Def\bm v_h\|_{L^2(\Omega)}^2.
\]
The trace-free Korn inequality \eqref{Eq:Korn} therefore gives
\[
  a_h(\bm v_h,\bm v_h)
  \ge
  c\|\bm v_h\|_{H^1(\Omega)}^2
  \qquad \forall\,\bm v_h\in\bm U_h,
\]
with a constant independent of $h$.
The Lax--Milgram lemma yields a unique $\bm u_h\in\bm U_h$ and
\[
  \|\bm u_h\|_{H^1(\Omega)}
  \le C\|r_h\|_{L^2(\Omega)}.
\]
The definition of $p_h$ then gives
\[
  \|p_h\|_{L^2(\Omega)}
  \le C\|r_h\|_{L^2(\Omega)}.
\]
This proves existence, uniqueness, and stability.
\end{proof}

\subsection{Discrete energy interpretation}

It is useful to keep the classical convex-splitting discrete functional in mind.
Define the discrete mean-zero space
\[
  V_{h,0}
  =
  \left\{
    v_h\in V_h:\int_\Omega v_h\,\dx=0
  \right\},
\]
and let $\mathcal N_h:V_{h,0}\to V_{h,0}$ denote the discrete Neumann operator
given by
\[
  \ip{\nabla\mathcal N_h g_h}{\nabla v_h}
  =
  \ip{g_h}{v_h}
  \qquad \forall\,v_h\in V_{h,0}.
\]
The corresponding discrete $H^{-1}$ norm is
\[
  \|g_h\|_{-1,h}^2
  :=
  \ip{\nabla\mathcal N_h g_h}{\nabla\mathcal N_h g_h}.
\]
Then the discrete convex-splitting functional is
\begin{equation}\label{eq:J_h}
  \mathcal J_h(c_h)
  :=
  \frac{1}{2m\tau}\|c_h-c_h^n\|_{-1,h}^2
  + \int_\Omega
    \frac{\varepsilon^2}{2}|\nabla c_h|^2 + f_c(c_h)
    - f_e'(c_h^n)c_h
    \,\dx,
\end{equation}
defined on $V_{h,\bar c_h^n}$.


\section{Well-posedness and equivalence}\label{sec:wellposed}

We now show that the four-field formulation is equivalent to the classical
convex-splitting step on the natural regularity class.

\subsection{Classical convex-splitting step}

Define the mean-zero subspace
\[
  V_0
  :=
  \left\{
    v\in H^1(\Omega): \int_\Omega v\,\dx = 0
  \right\}.
\]
For $g\in V_0'$, let $\mathcal N g\in V_0$ be the unique solution of
\[
  \ip{\nabla \mathcal N g}{\nabla v}
  = \dual{g}{v}
  \qquad \forall\,v\in V_0.
\]
This induces the Neumann $H^{-1}$ norm
\[
  \|g\|_{-1}^2
  :=
  \ip{\nabla \mathcal N g}{\nabla \mathcal N g}.
\]

We recall the definition
\(
  f=f_c-f_e\)
with $f_c,f_e \in C^1(\R)$ convex.
Define
\[
  E_c(c)
  :=
  \int_\Omega
  \frac{\varepsilon^2}{2}|\nabla c|^2 + f_c(c)\,\dx,
\]
and
\begin{equation}\label{eq:J_cont}
  \mathcal J(c)
  :=
  \frac{1}{2m\tau}\|c-c^n\|_{-1}^2
  + E_c(c)
  - \int_\Omega f_e'(c^n)c\,\dx,
\end{equation}
on the affine space $V_{\bar c^n}$.
We also use the Neumann-trace-constrained Sobolev space
\[
  H^2_N(\Omega)
  :=
  \left\{
    v\in H^2(\Omega): \partial_n v = 0 \text{ on }\partial\Omega
  \right\}.
\]

\begin{theorem}[Classical convex-splitting step]\label{thm:classical}
Let \(c^n\in H^1(\Omega)\), and assume
that \(f_c'\) and \(f_e'\) have at most polynomial growth. Then the functional
\(\mathcal J\) defined in \eqref{eq:J_cont} is coercive, weakly lower
semicontinuous, and strictly convex on \(V_{\bar c^n}\). Consequently, there
exists a unique minimizer
\(
  c^{n+1}\in V_{\bar c^n}
\).
Defining
\[
  \mu_0^{n+1}:=-\frac1{m\tau}\,\mathcal N(c^{n+1}-c^n)\in V_0,
\]
there exists a unique constant \(\alpha^{n+1}\in\mathbb R\) such that
\[
  \mu^{n+1}:=\mu_0^{n+1}+\alpha^{n+1}\in H^1(\Omega)
\]
satisfies
\begin{equation}\label{eq:classical_constitutive_2}
  \ip{\mu^{n+1}}{w}
  - \varepsilon^2\ip{\nabla c^{n+1}}{\nabla w}
  - \ip{G^n(c^{n+1})}{w}
  = 0
  \qquad \forall\,w\in H^1(\Omega).
\end{equation}
Hence \((c^{n+1},\mu^{n+1})\) is the unique solution of
\eqref{eq:classical1}--\eqref{eq:classical2}.
\end{theorem}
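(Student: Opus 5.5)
The proof follows the direct method of the calculus of variations, applied on the affine space $V_{\bar c^n}=c^n+V_0$. We parametrize $c=\bar c^n+v$ with $v\in V_0$, so all arguments take place on the Hilbert space $V_0$ with norm $\|\nabla v\|_{L^2}$, which is equivalent to the $H^1$ norm by Poincaré--Wirtinger.

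\emph{Convexity.} The term $\frac{1}{2m\tau}\|c-c^n\|_{-1}^2$ is a convex quadratic, since $\mathcal N$ is linear and bounded $V_0'\to V_0$. The map $c\mapsto\int f_c(c)$ is convex because $f_c$ is convex, and $-\int f_e'(c^n)c$ is linear. Strict convexity comes from $\frac{\varepsilon^2}{2}\|\nabla c\|^2$: for $c_1\ne c_2$ in $V_{\bar c^n}$, the difference lies in $V_0$ and is nonzero, so $\nabla(c_1-c_2)\ne 0$ and the quadratic is strictly convex along that segment.

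\emph{Coercivity.} By convexity, $f_c(s)\ge f_c(0)+f_c'(0)s$, so $\int f_c(c)\ge -C-C\|c\|_{L^2}$. In two dimensions, polynomial growth of $f_e'$ gives $f_e'(c^n)\in L^2$ via the Sobolev embedding $H^1\hookrightarrow L^q$ for all $q<\infty$. Hence
\[
\mathcal J(c)\ge \frac{\varepsilon^2}{2}\|\nabla c\|^2 - C(1+\|c\|_{H^1}),
\]
and Poincaré--Wirtinger on $V_{\bar c^n}$ turns this into coercivity.

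\emph{Weak lower semicontinuity.} Convexity together with strong continuity on $H^1$ suffices (Mazur). The integral $\int f_c(c)$ is strongly continuous on $H^1$ because polynomial growth and the 2D embedding make it a continuous Nemytskii functional; alternatively, one can use compactness $H^1\hookrightarrow\hookrightarrow L^q$ together with Fatou's lemma applied to $f_c-\text{affine}\ge0$. Existence then follows from the direct method, and uniqueness from strict convexity.

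\emph{Euler--Lagrange equation.} The same growth and embedding arguments show that $\mathcal J$ is Gateaux differentiable along $v\in V_0$, by dominated convergence applied to the difference quotients of $f_c$. The derivative of the $H^{-1}$ term in direction $v$ is $\frac{1}{m\tau}(\nabla\mathcal N(c-c^n),\nabla\mathcal N v)=\frac{1}{m\tau}(\mathcal N(c-c^n),v)$, where $(c-c^n)\in V_0$ is viewed in $V_0'$ through the $L^2$ pairing and the symmetry of $\mathcal N$ is used. The optimality condition therefore reads
\[
\ip{-\mu_0^{n+1}}{v}+\varepsilon^2\ip{\nabla c^{n+1}}{\nabla v}+\ip{G^n(c^{n+1})}{v}=0\qquad\forall v\in V_0 .
\]
To pass to all $w\in H^1$, we write $w=v+\bar w$ with $v\in V_0$ and $\bar w$ the mean of $w$. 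The gradient term vanishes on constants, so it remains to handle $w=1$. We choose
\[
\alpha^{n+1}=\frac{1}{|\Omega|}\int_\Omega G^n(c^{n+1})\dx,
\]
which is the unique value making the identity hold for $w=1$, since $\int\mu_0^{n+1}=0$. On $V_0$ the added constant does not contribute, because $(\alpha,v)=0$. This establishes \eqref{eq:classical_constitutive_2}.

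\emph{Verification of \eqref{eq:classical1}.} For $v\in V_0$, the definition of $\mathcal N$ gives $m(\nabla\mu,\nabla v)=-\frac1\tau(c-c^n,v)$. For $v=1$, both sides vanish by mass conservation.

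\emph{Uniqueness of the pair.} Any solution $(c,\mu)$ of \eqref{eq:classical1}--\eqref{eq:classical2} has $c\in V_{\bar c^n}$, obtained by testing with $v=1$. Equation \eqref{eq:classical1} then forces $\mu-\bar\mu=-\frac1{m\tau}\mathcal N(c-c^n)$, and substituting into \eqref{eq:classical2} shows that $c$ satisfies the Euler--Lagrange equation. Since $\mathcal J$ is convex, any critical point is a minimizer, so $c=c^{n+1}$, and $\alpha$ is then determined by testing with $w=1$.

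The main technical obstacle is the rigorous Gateaux differentiability and lower semicontinuity of $\int f_c(c)$ with only $C^1$ and polynomial-growth assumptions. This is where the two-dimensional Sobolev embedding into every $L^q$ is essential.
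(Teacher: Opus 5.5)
Your proposal is correct and follows essentially the same route as the paper. Both use the direct method on the fixed-mass affine space $V_{\bar c^n}$, with the affine lower bound for $f_c$ and strict convexity from the Dirichlet term. Both then derive the Euler--Lagrange equation on $V_0$, fix the constant $\alpha^{n+1}$ by testing with $1$, and get uniqueness because any solution is a critical point and hence the minimizer of the convex functional. You also make explicit several details the paper leaves implicit: weak lower semicontinuity via Mazur (convexity plus strong continuity), the formula $\alpha^{n+1}=|\Omega|^{-1}\int_\Omega G^n(c^{n+1})\,\mathrm{d}x$, and the two-dimensional embedding $H^1\hookrightarrow L^q$ used to justify $f_e'(c^n)\in L^2(\Omega)$ and G\^ateaux differentiability.
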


\begin{proof}
We first prove coercivity. Let \(c\in V_{\bar c^n}\). Since the mean value of
\(c\) is fixed, Poincar\'e's inequality gives
\[
  \|c\|_{L^2(\Omega)}
  \le
  C\left(\|\nabla c\|_{L^2(\Omega)}+|\bar c^n|\right).
\]
Convexity of \(f_c\) implies the affine lower bound
\[
  f_c(s)\ge f_c(0)+f_c'(0)s
  \qquad \forall\,s\in\mathbb R.
\]
Therefore,
\[
  \int_\Omega f_c(c)\,\dx
  \ge
  |\Omega|f_c(0)+f_c'(0)\int_\Omega c\,\dx,
\]
and the right-hand side is a constant on \(V_{\bar c^n}\). Moreover, since
\(f_e'(c^n)\in L^2(\Omega)\), Young's inequality gives, for every
\(\delta>0\),
\[
  \left|\int_\Omega f_e'(c^n)c\,\dx\right|
  \le
  \|f_e'(c^n)\|_{L^2(\Omega)}\|c\|_{L^2(\Omega)}
  \le
  \delta\|\nabla c\|_{L^2(\Omega)}^2
  +C_\delta\left(1+\|f_e'(c^n)\|_{L^2(\Omega)}^2+|\bar c^n|^2\right).
\]
Choosing \(\delta<\varepsilon^2/2\), and using the nonnegativity of the
\(H^{-1}\)-term, we obtain
\[
  \mathcal J(c)
  \ge
  \frac{\varepsilon^2}{4}\|\nabla c\|_{L^2(\Omega)}^2
  -C.
\]
By Poincar\'e's inequality on the fixed-mass affine space, this proves
coercivity of \(\mathcal J\) on \(V_{\bar c^n}\).

Next, let \(c_k\rightharpoonup c\) weakly in \(H^1(\Omega)\), with
\(c_k\in V_{\bar c^n}\). Then \(c\in V_{\bar c^n}\). The quadratic gradient term
is weakly lower semicontinuous. The \(H^{-1}\)-term is also weakly lower
semicontinuous, since the map
\[
  H^1(\Omega)\ni v\mapsto v-c^n\in V_0'
\]
is linear and continuous and the squared norm on \(V_0'\) is weakly lower
semicontinuous. The integral functional
\[
  c\mapsto \int_\Omega f_c(c)\,\dx
\]
is weakly lower semicontinuous because \(f_c\) is convex and continuous. Finally,
the linear term
\[
  c\mapsto -\int_\Omega f_e'(c^n)c\,\dx
\]
is weakly continuous on \(H^1(\Omega)\), since \(f_e'(c^n)\in L^2(\Omega)\).
Thus \(\mathcal J\) is weakly lower semicontinuous.

The direct method now gives a minimizer \(c^{n+1}\in V_{\bar c^n}\). The
functional is strictly convex on \(V_{\bar c^n}\): the term
\[
  c\mapsto \frac{\varepsilon^2}{2}\|\nabla c\|_{L^2(\Omega)}^2
\]
is strictly convex on the fixed-mass affine space, because if two functions in
\(V_{\bar c^n}\) differ by a constant, then that constant must have zero mean
and hence must vanish. The remaining terms are convex or linear. Therefore the
minimizer is unique.

Let \(\eta\in V_0\). Differentiating
\(\mathcal J(c^{n+1}+s\eta)\) at \(s=0\) gives
\[
  \frac1{m\tau}
  \ip{\nabla\mathcal N(c^{n+1}-c^n)}{\nabla\mathcal N\eta}
  +\varepsilon^2\ip{\nabla c^{n+1}}{\nabla\eta}
  +\ip{G^n(c^{n+1})}{\eta}
  =0.
\]
Since
\[
  \ip{\nabla\mathcal N(c^{n+1}-c^n)}{\nabla\mathcal N\eta}
  =
  \ip{c^{n+1}-c^n}{\mathcal N\eta},
\]
the definition
\[
  \mu_0^{n+1}:=-\frac1{m\tau}\mathcal N(c^{n+1}-c^n)
\]
yields
\[
  \ip{\frac{c^{n+1}-c^n}{\tau}}{v}
  +m\ip{\nabla\mu_0^{n+1}}{\nabla v}=0
  \qquad \forall\,v\in H^1(\Omega),
\]
because constants do not contribute to the gradient term and
\(c^{n+1}-c^n\) has zero mean. The Euler--Lagrange identity also gives
\[
  \ip{\mu_0^{n+1}}{\eta}
  -\varepsilon^2\ip{\nabla c^{n+1}}{\nabla\eta}
  -\ip{G^n(c^{n+1})}{\eta}
  =0
  \qquad \forall\,\eta\in V_0.
\]
Choose the unique constant \(\alpha^{n+1}\in\mathbb R\) such that the same
identity holds for the constant test function \(1\), namely
\[
  \alpha^{n+1}|\Omega|
  =
  \varepsilon^2\ip{\nabla c^{n+1}}{\nabla 1}
  +\ip{G^n(c^{n+1})}{1}
  -\ip{\mu_0^{n+1}}{1}.
\]
Since \(\nabla 1=0\), this fixes the mean of \(\mu^{n+1}\) and yields
\eqref{eq:classical_constitutive_2} for every \(w\in H^1(\Omega)\). Therefore
\((c^{n+1},\mu^{n+1})\) satisfies
\eqref{eq:classical1}--\eqref{eq:classical2}.

Conversely, any solution of \eqref{eq:classical1}--\eqref{eq:classical2}
satisfies the Euler--Lagrange equation for \(\mathcal J\) on
\(V_{\bar c^n}\), and hence has the same phase field by uniqueness of the
minimizer. The chemical potential is then fixed uniquely by
\eqref{eq:classical_constitutive_2}.
\end{proof}

\subsection{Continuous four-field equivalence}

We now show that the four-field formulation is rigorously equivalent to the
classical mixed time step, provided $c$ lies in the natural regularity class
$H^2(\Omega)$ with homogeneous Neumann trace.

\begin{lemma}[Elliptic regularity of the convex-splitting step on convex polygons]
\label{lem:H2reg}
Let \(\Omega\subset\R^2\) be a bounded convex polygon, and let
\((c,\mu)\in H^1(\Omega)\times H^1(\Omega)\) be the solution of the classical
convex-splitting step \eqref{eq:classical1}--\eqref{eq:classical2} with
\(c^n\in H^1(\Omega)\).
Assume that \(f_c'\) and \(f_e'\) are continuous and of at most polynomial growth.
Then it gives
\(
  c\in H^2(\Omega)\) with \(\partial_n c=0\) on \(\partial\Omega\),
that is, \(c\in H^2_N(\Omega)\).
\end{lemma}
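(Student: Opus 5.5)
The plan is to read \eqref{eq:classical2} as the weak form of a Neumann problem for $c$ and then invoke $H^2$-regularity of the Neumann Laplacian on convex polygons. Set
\[
  g:=\mu-G^n(c)=\mu-f_c'(c)+f_e'(c^n).
\]
Then \eqref{eq:classical2} says
\[
  \varepsilon^2\ip{\nabla c}{\nabla w}=\ip{g}{w}\qquad\forall\,w\in H^1(\Omega).
\]
So $c$ is a weak solution of $-\varepsilon^2\Delta c=g$ with homogeneous Neumann condition. Testing with $w=1$ gives the compatibility condition $\int_\Omega g\,\dx=0$, so the Neumann problem is consistent. Once $g\in L^2(\Omega)$, we can use the classical result (Grisvard) that on a bounded convex polygon the weak solution of the Neumann problem with $L^2$ data lies in $H^2(\Omega)$ and satisfies
\[
  \|c\|_{H^2}\le C\bigl(\|g\|_{L^2}+\|c\|_{L^2}\bigr).
\]
The Neumann trace $\partial_n c=0$ then holds in the trace sense on each edge. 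To see this, integrate by parts in the weak formulation for the now $H^2$ function $c$: the boundary term $\int_{\partial\Omega}\partial_n c\, w\,\ds$ must vanish for all $w\in H^1(\Omega)$, and traces of $H^1$ functions are dense enough on $\partial\Omega$ to force $\partial_n c=0$.

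The main step is therefore to show $g\in L^2(\Omega)$. We have $\mu\in H^1(\Omega)\subset L^2(\Omega)$ directly. For the nonlinear terms, use the two-dimensional Sobolev embedding $H^1(\Omega)\hookrightarrow L^q(\Omega)$ for every $q<\infty$. Polynomial growth gives $|f_c'(s)|\le C(1+|s|^k)$ for some $k$, so
\[
  \|f_c'(c)\|_{L^2}\le C\bigl(1+\|c\|_{L^{2k}}^k\bigr)<\infty,
\]
and the same bound holds for $f_e'(c^n)$, since $c^n\in H^1(\Omega)$. Continuity of $f_c'$ and $f_e'$ ensures these compositions are measurable. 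This gives $G^n(c)\in L^2(\Omega)$ and hence $g\in L^2(\Omega)$.

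I do not expect a real obstacle. The only subtle point is citing the correct regularity theorem for the Neumann problem on nonsmooth domains. Convexity excludes reentrant corners, and this is exactly what rules out the singular functions $r^{\pi/\omega}\cos(\pi\theta/\omega)$ with $\omega>\pi$ that would otherwise destroy $H^2$-regularity. I would cite Grisvard's theorem (the Neumann version of Theorem 3.2.1.3 for convex domains) directly rather than reprove it. The embedding step uses only $d=2$; in three dimensions a growth restriction would be needed.
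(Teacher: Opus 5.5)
Your proposal is correct and follows the paper's route: use the two-dimensional embedding $H^1(\Omega)\hookrightarrow L^q(\Omega)$ and polynomial growth to get $G^n(c)\in L^2(\Omega)$, read \eqref{eq:classical2} as a weak homogeneous Neumann problem with $L^2$ data, and invoke Grisvard's $H^2$-regularity on convex polygons. Your explicit checks of the compatibility condition $\int_\Omega g\,\dx=0$ and of $\partial_n c=0$ via integration by parts only spell out details the paper leaves implicit.
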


\begin{proof}
Since \(\Omega\subset\R^2\), the Sobolev embedding
\(H^1(\Omega)\hookrightarrow L^q(\Omega)\) holds for every finite \(q\), so
\(c,c^n\in L^q(\Omega)\) for all \(q<\infty\); together with the polynomial growth
of \(f_c'\) and \(f_e'\) this yields \(G^n(c)=f_c'(c)-f_e'(c^n)\in L^2(\Omega)\).
By \eqref{eq:classical2}, \(c\) is the weak solution of the Neumann problem
\(-\varepsilon^2\Delta c=\mu-G^n(c)\) with \(\partial_n c=0\). Thus, the right-hand
side belongs to \(L^2(\Omega)\) due to \(\mu\in H^1(\Omega)\hookrightarrow
L^2(\Omega)\). On a convex polygon the homogeneous
Neumann Laplacian is \(H^2\)-regular \cite{grisvard2011elliptic}, whence
\(c\in H^2(\Omega)\) with the Neumann trace retained.
\end{proof}

The following reformulation theorem shows that the auxiliary variables do not alter the phase-field
update itself.
They provide an explicit second-order representation of the Laplacian
contribution in the constitutive relation once the regularity
$c\in H^2_N(\Omega)$ is available.

\begin{theorem}[Continuous four-field reformulation]\label{thm:fourfield}
Assume the hypotheses of Theorem~\ref{thm:classical}. Let
\(
  (c,\mu)\in H^1(\Omega)\times H^1(\Omega)
\)
be the unique solution of the classical convex-splitting step
\eqref{eq:classical1}--\eqref{eq:classical2}. Then $G^n(c)\in L^2(\Omega)$, and
Lemma~\ref{lem:H2reg} gives
\(c\in H^2_N(\Omega)\).
Consequently, $-\Delta c\in L^2_0(\Omega)$, and there exists a unique pair
\(
  (p,\bm u)\in Q_0\times \bm U
\)
such that the quadruple $(c,\mu,p,\bm u)$ satisfies
\eqref{eq:4f1}--\eqref{eq:4f4}.

Conversely, let
\(
  (c,\mu,p,\bm u)\in
  V_{\bar c^n}\times H^1(\Omega)\times Q_0\times \bm U
\)
with
\[
  c\in H^2(\Omega),\qquad
  \partial_n c=0 \text{ on }\partial\Omega,\qquad
  G^n(c)\in L^2(\Omega),
\]
satisfy \eqref{eq:4f1}--\eqref{eq:4f4}. Then $(c,\mu)$ solves the classical
mixed step \eqref{eq:classical1}--\eqref{eq:classical2}. Therefore the
continuous four-field system is equivalent to the classical convex-splitting
time step on the regularity class
\[
  H^2_N(\Omega)\times H^1(\Omega)\times Q_0\times \bm U,
\]
subject to the integrability condition $G^n(c)\in L^2(\Omega)$.
\end{theorem}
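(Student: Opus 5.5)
The proof splits into the two directions of the statement. In both, the key link is that for $c\in H^2_N(\Omega)$ Green's formula gives
\[
  \ip{\nabla c}{\nabla w}=\ip{-\Delta c}{w}\qquad\forall\,w\in H^1(\Omega).
\]
The boundary term $\int_{\partial\Omega}\partial_n c\,w\,\ds$ vanishes by the Neumann condition.

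\emph{Forward direction.} Let $(c,\mu)$ be the classical solution from Theorem~\ref{thm:classical}.
\begin{enumerate}[label=(\roman*)]
  \item Integrability of $G^n(c)$ and $c\in H^2_N(\Omega)$ follow exactly as in the proof of Lemma~\ref{lem:H2reg}: the planar Sobolev embedding combined with polynomial growth gives $G^n(c)\in L^2(\Omega)$, and convex-polygon Neumann regularity gives the $H^2$ statement.
  \item The divergence theorem with $\partial_n c=0$ shows $-\Delta c\in L^2_0(\Omega)$.
  \item Proposition~\ref{prop:RZblock} with $r=-\Delta c$ yields a unique $(p,\bm u)\in Q_0\times\bm U$ satisfying \eqref{eq:4f3}--\eqref{eq:4f4}.
  \item By \eqref{eq:laplace_repr}, $2p+\Div\bm u=-\Delta c$ holds in $L^2(\Omega)$, and not merely against $Q_0$. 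The reason is that both sides have zero mean, since $\Div\bm u\in L^2_0$ for $\bm u\in\bm U$ and $p\in Q_0$. Their difference is therefore a mean-zero function orthogonal to $Q_0$, hence zero.
  \item Equation \eqref{eq:4f1} coincides with \eqref{eq:classical1}. Mass conservation, obtained by testing with $v=1$, gives $c\in V_{\bar c^n}$.
  \item For \eqref{eq:4f2}, rewrite \eqref{eq:classical2} with Green's formula as $\ip{\mu+\varepsilon^2\Delta c-G^n(c)}{w}=0$ for all $w\in H^1(\Omega)$. The function $\mu+\varepsilon^2\Delta c-G^n(c)$ lies in $L^2$, and $H^1$ is dense in $L^2$, so the identity extends to all $w\in L^2(\Omega)$. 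Replacing $-\Delta c$ by $2p+\Div\bm u$ then gives \eqref{eq:4f2}.
\end{enumerate}

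\emph{Uniqueness of the pair.} This should be read as uniqueness of $(p,\bm u)$ for the given $(c,\mu)$. Equations \eqref{eq:4f3}--\eqref{eq:4f4} are exactly the auxiliary block with datum $-\Delta c$, so uniqueness is inherited from Proposition~\ref{prop:RZblock}.

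\emph{Converse direction.} Let $(c,\mu,p,\bm u)$ solve \eqref{eq:4f1}--\eqref{eq:4f4} in the stated class.
\begin{enumerate}[label=(\roman*)]
  \item Equation \eqref{eq:4f1} is \eqref{eq:classical1} verbatim.
  \item From \eqref{eq:4f3}, the same mean-zero argument as in step (iv) above gives $2p+\Div\bm u=-\Delta c$ in $L^2(\Omega)$. The key point is that $2p+\Div\bm u-(-\Delta c)$ has zero mean, because $c\in H^2_N$, $p\in Q_0$ and $\bm u|_{\partial\Omega}=0$. It is also orthogonal to $L^2_0$, so it is constant, and a constant with zero mean vanishes.
  \item Insert this into \eqref{eq:4f2}, restrict to $w\in H^1(\Omega)\subset L^2(\Omega)$, and apply Green's formula in reverse. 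This gives $\ip{\mu}{w}-\varepsilon^2\ip{\nabla c}{\nabla w}-\ip{G^n(c)}{w}=0$, which is \eqref{eq:classical2}.
  \item Hence $(c,\mu)$ solves the classical step, and by the uniqueness in Theorem~\ref{thm:classical} it is the unique classical solution. The equivalence statement on the indicated regularity class follows.
\end{enumerate}

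\emph{Main obstacle.} The delicate step is the upgrade of \eqref{eq:4f3} from testing against $Q_0$ to a pointwise $L^2$ identity. Since $Q_0$ excludes constants, this upgrade relies entirely on the zero-mean compatibility of the three terms. Each zero-mean property must be checked carefully: $\Div\bm u$ through the Dirichlet trace of $\bm u$, and $\Delta c$ through the Neumann trace of $c$. Everything else is bookkeeping with Green's formula, the density of $H^1$ in $L^2$, and the already-established Proposition~\ref{prop:RZblock} and Theorem~\ref{thm:classical}.
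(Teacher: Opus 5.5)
Your proposal is correct and follows the paper's proof essentially step by step. For the forward direction you use Lemma~\ref{lem:H2reg} and Proposition~\ref{prop:RZblock}; for the converse you upgrade \eqref{eq:4f3} to an $L^2$ identity by the zero-mean argument and then integrate by parts; and uniqueness comes from Theorem~\ref{thm:classical} and Proposition~\ref{prop:RZblock}, exactly as in the paper. Your density argument extending \eqref{eq:classical2} from $H^1$ to $L^2$ test functions, and your check that $c\in V_{\bar c^n}$, only spell out steps the paper asserts without detail.
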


\begin{proof}
Let $(c,\mu)$ solve the classical mixed step. By the additional growth
assumptions and Lemma~\ref{lem:H2reg}, $c\in H^2_N(\Omega)$. Hence
\[
  r:=-\Delta c\in L^2(\Omega),
  \qquad
  \int_\Omega r\,\dx=-\int_{\partial\Omega}\partial_n c\,\ds=0,
\]
so $r\in Q_0$. Proposition~\ref{prop:RZblock} gives a unique
$(p,\bm u)\in Q_0\times \bm U$ satisfying \eqref{eq:RZblock1}--\eqref{eq:RZblock2}
with right-hand side $r$. Thus \eqref{eq:4f3}--\eqref{eq:4f4} hold. Since
\eqref{eq:classical2} and the Neumann trace of $c$ imply
\[
  \mu = -\varepsilon^2\Delta c + G^n(c)
       = \varepsilon^2(2p+\Div\bm u)+G^n(c)
  \qquad\text{in }L^2(\Omega),
\]
equation \eqref{eq:4f2} follows. Equation \eqref{eq:4f1} is exactly
\eqref{eq:classical1}.

Conversely, let $(c,\mu,p,\bm u)$ satisfy \eqref{eq:4f1}--\eqref{eq:4f4} under
the stated regularity assumptions. From \eqref{eq:4f3} and the fact that both
$2p+\Div\bm u$ and $-\Delta c$ have zero mean, we have
\[
  2p+\Div\bm u = -\Delta c
  \qquad \text{in }L^2(\Omega).
\]
Inserting this identity into \eqref{eq:4f2} gives
\[
  \mu = -\varepsilon^2\Delta c + G^n(c)
  \qquad \text{in }L^2(\Omega).
\]
Therefore, for every $w\in H^1(\Omega)$,
\[
  \ip{\mu}{w}
  - \varepsilon^2\ip{\nabla c}{\nabla w}
  - \ip{G^n(c)}{w}
  = 0,
\]
where integration by parts uses $c\in H^2(\Omega)$ and $\partial_n c=0$.
Together with \eqref{eq:4f1}, this is exactly the classical mixed step. The
uniqueness of $(c,\mu)$ follows from Theorem~\ref{thm:classical}, and the
uniqueness of $(p,\bm u)$ follows from Proposition~\ref{prop:RZblock}.
\end{proof}

\subsection{Discrete well-posedness}

The same logic yields the discrete counterpart, provided the auxiliary scalar
space is large enough to test all mean-zero phase-field functions.

\begin{theorem}[Discrete four-field step]\label{thm:disc_wp}
Assume \eqref{eq:Q_contains_Vh0}. Let
\(c_h^n\in V_h\), and assume that \(f_c'\) and \(f_e'\) have at most polynomial
growth. Then the discrete functional \(\mathcal J_h\) from \eqref{eq:J_h} is
coercive and strictly convex on \(V_{h,\bar c_h^n}\). Hence there exists a unique
minimizer
\(
  c_h^{n+1}\in V_{h,\bar c_h^n}
\).
Let $\mu_h^{n+1}\in V_h$ be the unique chemical potential determined by the
discrete Euler--Lagrange equation associated with $\mathcal J_h$. Define
$r_h^{n+1}\in Q_h$ by
\begin{equation}\label{eq:disc_weakLap}
  \ip{r_h^{n+1}}{q_h}
  =
  \ip{\nabla c_h^{n+1}}{\nabla q_h}
  \qquad \forall\,q_h\in Q_h.
\end{equation}
Solving the discrete auxiliary block
\eqref{eq:disc_block1}--\eqref{eq:disc_block2} with right-hand side
$r_h^{n+1}$ gives a unique pair
\(
  (p_h^{n+1},\bm u_h^{n+1})\in Q_h\times\bm U_h
\).
Consequently, the quadruple
\[
  (c_h^{n+1},\mu_h^{n+1},p_h^{n+1},\bm u_h^{n+1})
  \in
  V_{h,\bar c_h^n}\times V_h\times Q_h\times\bm U_h
\]
satisfies \eqref{eq:d4f1}--\eqref{eq:d4f4}. Conversely, any solution of
\eqref{eq:d4f1}--\eqref{eq:d4f4} has its first two components equal to the
standard conforming convex-splitting solution. Hence the discrete four-field
step is equivalent to the standard conforming mixed step, with the auxiliary
variables uniquely determined by the discrete weak Laplacian recovery.
\end{theorem}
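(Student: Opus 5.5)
The plan has two halves. The first mirrors the proof of Theorem~\ref{thm:classical} in finite dimensions to get existence and uniqueness for the standard conforming step. The second inserts the discrete auxiliary block of Proposition~\ref{prop:disc_RZblock} and uses the inclusion \eqref{eq:Q_contains_Vh0} to show that the recovered scalar $2p_h+\Div\bm u_h$ reproduces the term $\ip{\nabla c_h}{\nabla w_h}$ when tested against every $w_h\in V_h$.

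\textbf{Existence and the standard conforming step.}
\begin{itemize}
\item \emph{Coercivity.} On the finite-dimensional affine space $V_{h,\bar c_h^n}$, coercivity of $\mathcal J_h$ follows exactly as in the continuous case. The terms $\|\cdot\|_{-1,h}^2$ and $\int_\Omega f_c(c_h)\dx$ are bounded below, the latter through the affine minorant $f_c(s)\ge f_c(0)+f_c'(0)s$. The linear term is absorbed via Young's inequality and the Poincar\'e inequality on the fixed-mass space, where $f_e'(c_h^n)\in L^2(\Omega)$ by polynomial growth and the embedding $V_h\subset H^1(\Omega)\hookrightarrow L^q(\Omega)$.
\item \emph{Minimizer.} Continuity is automatic in finite dimensions, so a minimizer exists.
\item \emph{Uniqueness.} Strict convexity comes from the gradient term, since two elements of $V_{h,\bar c_h^n}$ differing by a constant coincide.
\item \emph{Euler--Lagrange equation.} Differentiating in directions $\eta_h\in V_{h,0}$ and setting $\mu_{h,0}:=-\frac1{m\tau}\mathcal N_h(c_h^{n+1}-c_h^n)$ gives \eqref{eq:d4f1} for all $v_h\in V_h$; constants contribute nothing, and $c_h^{n+1}-c_h^n\in V_{h,0}$. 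It also gives the constitutive identity
\[
  \ip{\mu_h}{w_h}-\varepsilon^2\ip{\nabla c_h^{n+1}}{\nabla w_h}-\ip{G_h^n(c_h^{n+1})}{w_h}=0
\]
for $w_h\in V_{h,0}$.
\item \emph{Mean of $\mu_h$.} Adding the unique constant $\alpha_h$ extends this identity to $w_h=1$, hence to all of $V_h$. This defines the unique $\mu_h^{n+1}$ and yields the standard conforming convex-splitting solution.
\end{itemize}

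\textbf{Forward direction.} Define $r_h^{n+1}$ by \eqref{eq:disc_weakLap}; it exists uniquely as a Riesz representative in $Q_h$. Solve the block via Proposition~\ref{prop:disc_RZblock}, which gives \eqref{eq:d4f3}--\eqref{eq:d4f4} immediately. It remains to turn the standard constitutive identity into \eqref{eq:d4f2}, and this is the key step. Split an arbitrary $w_h\in V_h$ as
\[
  w_h=w_{h,0}+\bar w_h, \qquad w_{h,0}\in V_{h,0}\subset Q_h .
\]
\begin{itemize}
\item \emph{Zero-mean part.} Testing \eqref{eq:d4f3} with $q_h=w_{h,0}$ gives
\[
  \ip{2p_h+\Div\bm u_h}{w_{h,0}}=\ip{\nabla c_h}{\nabla w_{h,0}}=\ip{\nabla c_h}{\nabla w_h}.
\]
\item \emph{Constant part.} Since $p_h\in L^2_0(\Omega)$ and $\int_\Omega\Div\bm u_h\dx=0$ because $\bm u_h$ vanishes on $\partial\Omega$, we get $\ip{2p_h+\Div\bm u_h}{\bar w_h}=0=\ip{\nabla c_h}{\nabla\bar w_h}$.
\end{itemize}
Hence $\ip{2p_h+\Div\bm u_h}{w_h}=\ip{\nabla c_h}{\nabla w_h}$ for all $w_h\in V_h$. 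Substituting this into the standard constitutive identity gives \eqref{eq:d4f2}.

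\textbf{Converse direction.} Let $(c_h,\mu_h,p_h,\bm u_h)$ solve \eqref{eq:d4f1}--\eqref{eq:d4f4}. The same mean-splitting argument applied to \eqref{eq:d4f3} gives $\ip{2p_h+\Div\bm u_h}{w_h}=\ip{\nabla c_h}{\nabla w_h}$ on $V_h$. Then \eqref{eq:d4f2} becomes the standard constitutive equation, so $(c_h,\mu_h)$ solves the standard conforming mixed step. That pair is exactly the Euler--Lagrange system of $\mathcal J_h$: test with $V_{h,0}$ and use $\mathcal N_h$ to rewrite \eqref{eq:d4f1}. By strict convexity, $c_h=c_h^{n+1}$, and $\mu_h$ is then fixed uniquely. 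Finally, $(p_h,\bm u_h)$ equals the block solution with datum $r_h^{n+1}$ by the uniqueness in Proposition~\ref{prop:disc_RZblock}.

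\textbf{Main obstacle.} The delicate point is precisely where \eqref{eq:Q_contains_Vh0} enters. Equation \eqref{eq:d4f2} is tested on $V_h$ while \eqref{eq:d4f3} is tested only on $Q_h$, and the gap between them must be closed by the mean-zero splitting above, including the observation that $\int_\Omega\Div\bm u_h\dx=0$. A secondary subtlety is the converse identification via the Euler--Lagrange characterization, which relies on $\mathcal N_h$ being defined on $V_{h,0}$.
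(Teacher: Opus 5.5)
Your proposal is correct and follows the paper's proof essentially step for step. You restrict the continuous coercivity and strict-convexity argument to $V_{h,\bar c_h^n}$, obtain $(p_h,\bm u_h)$ from Proposition~\ref{prop:disc_RZblock}, and use the splitting $w_h=\bar w_h+w_h^0$ with $w_h^0\in V_{h,0}\subset Q_h$ and $\ip{2p_h+\Div\bm u_h}{1}=0$ to extend \eqref{eq:d4f3} to all of $V_h$ in both directions. Your converse is slightly more explicit than the paper's: you show that the recovered standard mixed step is the Euler--Lagrange system of $\mathcal J_h$ (via $\mathcal N_h$ on $V_{h,0}$), so that convexity and strict convexity force $c_h=c_h^{n+1}$, a step the paper leaves implicit.
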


\begin{proof}
The proof of coercivity and strict convexity is the restriction of the argument
from Theorem~\ref{thm:classical} to the finite element affine space
\(V_{h,\bar c_h^n}\). Indeed, the fixed mass constraint gives the same
Poincar\'e control of \(\|c_h\|_{L^2}\) by \(\|\nabla c_h\|_{L^2}\), the convexity
of \(f_c\) gives the same affine lower bound, and the explicit term involving
\(f_e'(c_h^n)\) is controlled by Young's inequality. Hence
\(\mathcal J_h\) has a unique minimizer \(c_h^{n+1}\in V_{h,\bar c_h^n}\).

The minimization argument gives a unique $c_h^{n+1}\in V_{h,\bar c_h^n}$.
The Euler--Lagrange equations for $\mathcal J_h$ give the standard conforming
mixed convex-splitting equations: find
$(c_h^{n+1},\mu_h^{n+1})\in V_{h,\bar c_h^n}\times V_h$ such that
\[
  \ip{\frac{c_h^{n+1}-c_h^n}{\tau}}{v_h}
  +m\ip{\nabla\mu_h^{n+1}}{\nabla v_h}
  =0
  \qquad \forall\,v_h\in V_h,
\]
and
\[
  \ip{\mu_h^{n+1}}{w_h}
  -\varepsilon^2\ip{\nabla c_h^{n+1}}{\nabla w_h}
  -\ip{G_h^n(c_h^{n+1})}{w_h}
  =0
  \qquad \forall\,w_h\in V_h.
\]
The function $r_h^{n+1}\in Q_h$ is uniquely determined by
\eqref{eq:disc_weakLap}. Proposition~\ref{prop:disc_RZblock} gives a unique
$(p_h^{n+1},\bm u_h^{n+1})\in Q_h\times\bm U_h$ satisfying
\eqref{eq:d4f3}--\eqref{eq:d4f4}.

It remains to verify \eqref{eq:d4f2}. Let $w_h\in V_h$, and write
\[
  w_h = \overline w_h + w_h^0,
  \qquad
  \overline w_h:=\frac1{|\Omega|}\int_\Omega w_h\,\dx,
  \qquad
  w_h^0:=w_h-\overline w_h .
\]
Then $w_h^0\in V_{h,0}\subset Q_h$. Since $p_h^{n+1}\in L^2_0(\Omega)$ and
$\bm u_h^{n+1}\in[H^1_0(\Omega)]^2$,
\[
  \ip{2p_h^{n+1}+\Div\bm u_h^{n+1}}{1}=0.
\]
Moreover $\nabla \overline w_h=0$. Hence, using \eqref{eq:d4f3},
\[
  \ip{2p_h^{n+1}+\Div\bm u_h^{n+1}}{w_h}
  =
  \ip{2p_h^{n+1}+\Div\bm u_h^{n+1}}{w_h^0}
  =
  \ip{\nabla c_h^{n+1}}{\nabla w_h^0}
  =
  \ip{\nabla c_h^{n+1}}{\nabla w_h}.
\]
Substituting this identity into the standard discrete constitutive equation
gives \eqref{eq:d4f2}.

Conversely, suppose that $(c_h,\mu_h,p_h,\bm u_h)$ satisfies
\eqref{eq:d4f1}--\eqref{eq:d4f4}. The same decomposition of an arbitrary
$w_h\in V_h$ and the inclusion $V_{h,0}\subset Q_h$ imply
\[
  \ip{2p_h+\Div\bm u_h}{w_h}
  =
  \ip{\nabla c_h}{\nabla w_h}
  \qquad \forall\,w_h\in V_h.
\]
Therefore \eqref{eq:d4f2} is exactly the standard discrete constitutive
equation. Together with \eqref{eq:d4f1}, this proves that $(c_h,\mu_h)$ is the
standard conforming convex-splitting solution. Uniqueness of the auxiliary pair
follows from Proposition~\ref{prop:disc_RZblock}.
\end{proof}

\section{A priori error estimates}\label{sec:apriori}

In this section we derive one-step spatial estimates for the phase-field
variables and a stable auxiliary-block estimate for the auxiliary variables.
The auxiliary estimate contains an explicit weak-Laplacian recovery defect because the discrete recovery of
$-\Delta c$ is only weakly defined by the finite element phase field.
The Rafetseder--Zulehner connection enters through the scalar
trace structure $2p+\Div\bm u$ underlying the auxiliary representation.
Throughout this section, we use the discrete spaces introduced in
Section~\ref{sec:disc}:
\[
  V_h=\mathbb V_h^{r_c},
  \qquad
  Q_h=\mathbb V_h^{r_p}\cap L^2_0(\Omega),
  \qquad
  \bm U_h=[\mathbb V_h^{r_u}\cap H^1_0(\Omega)]^2.
\]
The phase field and chemical potential are both approximated in \(V_h\).

\subsection{Approximation and nonlinear assumptions}

We assume the standard approximation properties
\begin{align}
  \inf_{v_h\in V_h}\|v-v_h\|_{H^1(\Omega)}
  &\le C h^{r_c}\|v\|_{H^{r_c+1}(\Omega)},
  \label{eq:approx_V}\\
  \inf_{q_h\in Q_h}\|q-q_h\|_{L^2(\Omega)}
  &\le C h^{r_p+1}\|q\|_{H^{r_p+1}(\Omega)},
  \label{eq:approx_Q}\\
  \inf_{\bm z_h\in \bm U_h}\|\bm z-\bm z_h\|_{H^1(\Omega)}
  &\le C h^{r_u}\|\bm z\|_{H^{r_u+1}(\Omega)}.
  \label{eq:approx_U}
\end{align}
For the nonlinear terms we use a local hypothesis. Let $I\subset\R$ be an
interval containing the essential ranges of
$c$, $c_h$, $c^n$ and $c_h^n$.
Assume that $f_c$ is convex and that $f_c'$ and $f_e'$ are Lipschitz on $I$.
Thus there is a constant $L_I>0$ such that
\begin{equation}\label{eq:local_Lip_fc_fe}
  |f_c'(r)-f_c'(s)|+|f_e'(r)-f_e'(s)|
  \le L_I |r-s|
  \qquad \forall\,r,s\in I .
\end{equation}
The constants in the estimates below may depend on $I$, but not on $h$.

We define
\(
  V_{h,0}:=V_h\cap L^2_0(\Omega).
\)
For $g\in L^2_0(\Omega)$, introduce the discrete Neumann operator $\mathcal N_h g\in V_{h,0}$ as
\begin{equation}\label{eq:Nh}
  \ip{\nabla \mathcal N_h g}{\nabla v_h}
  =
  \ip{g}{v_h}
  \qquad \forall\,v_h\in V_{h,0}.
\end{equation}
This induces the discrete negative norm on $V_{h,0}$,
\begin{equation}\label{eq:Hminus1h}
  \|g_h\|_{-1,h}
  :=
  \|\nabla \mathcal N_h g_h\|_{L^2(\Omega)}
  \qquad (g_h\in V_{h,0}).
\end{equation}
For general $g\in L^2_0(\Omega)$ the same formula defines a discrete dual
seminorm. It depends only on the action of $g$ on $V_{h,0}$, equivalently on the
$L^2$-projection of $g$ onto $V_{h,0}$.

\subsection{Error estimate for the phase-field variables}

Let $(c,\mu)$ denote the exact solution of the classical convex-splitting mixed
step and let $(c_h,\mu_h)\in V_h\times V_h$ denote the corresponding conforming
finite element solution. We assume that $c_h^n\in V_h$ is a given approximation
of $c^n$ with
\[
  \int_\Omega c_h^n\,\dx=\int_\Omega c^n\,\dx.
\]

\begin{theorem}[One-step estimate for the phase-field variables]
\label{thm:phase_error}
Assume \eqref{eq:local_Lip_fc_fe}. Let
\[
  c,\mu\in H^{r_c+1}(\Omega),
  \qquad
  c\in H^2(\Omega),
  \qquad
  \partial_n c=0 \text{ on }\partial\Omega.
\]
Then there exists a constant $C>0$, independent of $h$, but possibly depending
on $\Omega$, $m$, $\tau$, $\varepsilon$, and the local Lipschitz interval $I$,
such that
\begin{equation}
\begin{aligned}
  \|c&-c_h\|_{-1,h}
  + \|\nabla(c-c_h)\|_{L^2(\Omega)}
  + \|\nabla(\mu-\mu_h)\|_{L^2(\Omega)}
\\
  &\le
  C\Bigl(
    h^{r_c}\|c\|_{H^{r_c+1}(\Omega)}
    + h^{r_c}\|\mu\|_{H^{r_c+1}(\Omega)}
    + \|c^n-c_h^n\|_{-1,h}
    + \|c^n-c_h^n\|_{L^2(\Omega)}
  \Bigr).
\end{aligned}
  \label{eq:phase_est}
  \end{equation}
Moreover, since $c-c_h$ has zero mean, the same right-hand side controls
$\|c-c_h\|_{H^1(\Omega)}$.
\end{theorem}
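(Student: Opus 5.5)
We split both errors with the elliptic (Ritz) projection $R_h:H^1(\Omega)\to V_h$, defined by $\ip{\nabla(v-R_hv)}{\nabla w_h}=0$ for all $w_h\in V_h$ together with $\int_\Omega(v-R_hv)\,\dx=0$. We then write
\[
  c-c_h=(c-R_hc)+(R_hc-c_h)=:\rho_c+\theta_c,\qquad
  \mu-\mu_h=\rho_\mu+\theta_\mu .
\]
By \eqref{eq:approx_V} and C\'ea, $\|\nabla\rho_c\|_{L^2}\le Ch^{r_c}\|c\|_{H^{r_c+1}}$ and $\|\nabla\rho_\mu\|_{L^2}\le Ch^{r_c}\|\mu\|_{H^{r_c+1}}$. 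Since $\rho_c$ has zero mean, Poincar\'e gives the same bound for $\|\rho_c\|_{L^2}$, and hence for $\|\rho_c\|_{-1,h}\le C\|\rho_c\|_{L^2}$.

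Mass conservation in both schemes, together with $\int c_h^n=\int c^n$, gives $\theta_c\in V_{h,0}$. Subtracting the discrete equations from the continuous ones, tested with $V_h$ functions, yields the error equations
\begin{align*}
  \tfrac1\tau\ip{\theta_c}{v_h}+m\ip{\nabla\theta_\mu}{\nabla v_h}
  &=-\tfrac1\tau\ip{\rho_c-(c^n-c_h^n)}{v_h},\\
  \ip{\theta_\mu}{w_h}-\varepsilon^2\ip{\nabla\theta_c}{\nabla w_h}
  &=-\ip{\rho_\mu}{w_h}+\ip{G^n(c)-G_h^n(c_h)}{w_h}.
\end{align*}
The Ritz orthogonality has removed the gradient terms of $\rho$. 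Next we test the first equation with $v_h=\mathcal N_h\theta_c$ and the second with $w_h=\theta_c$. Using the first equation with $v_h=\mathcal N_h\theta_c$ and the definition \eqref{eq:Nh} (with test function $\theta_\mu$ minus its mean, which is harmless because $\theta_c$ has zero mean), we get
\[
  m\ip{\theta_\mu}{\theta_c}=-\tfrac1\tau\|\theta_c\|_{-1,h}^2-\tfrac1\tau\ip{\rho_c-e^n}{\mathcal N_h\theta_c},
  \qquad e^n:=c^n-c_h^n .
\]
Substituting into the second tested equation gives
\[
  \tfrac1{m\tau}\|\theta_c\|_{-1,h}^2+\varepsilon^2\|\nabla\theta_c\|^2
  =-\tfrac1{m\tau}\ip{\rho_c-e^n}{\mathcal N_h\theta_c}+\ip{\rho_\mu}{\theta_c}-\ip{G^n(c)-G_h^n(c_h)}{\theta_c}.
\]

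For the nonlinear term we split
\[
  G^n(c)-G_h^n(c_h)=\bigl(f_c'(c)-f_c'(c_h)\bigr)-\bigl(f_e'(c^n)-f_e'(c_h^n)\bigr).
\]
Convexity of $f_c$ gives $\ip{f_c'(c)-f_c'(c_h)}{c-c_h}\ge0$. We therefore write $\theta_c=(c-c_h)-\rho_c$ and bound $\ip{f_c'(c)-f_c'(c_h)}{\rho_c}\le L_I\|c-c_h\|\,\|\rho_c\|$ via \eqref{eq:local_Lip_fc_fe}. The explicit part is bounded by $L_I\|e^n\|_{L^2}\|\theta_c\|_{L^2}$. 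Since $\theta_c$ has zero mean, Poincar\'e gives $\|\theta_c\|_{L^2}\le C\|\nabla\theta_c\|$. Moreover,
\[
  |\ip{g}{\mathcal N_h\theta_c}|=|\ip{\nabla\mathcal N_h g}{\nabla\mathcal N_h\theta_c}|\le\|g\|_{-1,h}\|\theta_c\|_{-1,h},
\]
and $\|\rho_\mu\|$ only enters through its mean-free part, paired against $\theta_c\in V_{h,0}$, where Poincar\'e applies. Young's inequality then absorbs all $\theta_c$ terms into the left-hand side. This yields
\[
  \|\theta_c\|_{-1,h}+\|\nabla\theta_c\|\le C\bigl(h^{r_c}(\|c\|_{H^{r_c+1}}+\|\mu\|_{H^{r_c+1}})+\|e^n\|_{-1,h}+\|e^n\|_{L^2}\bigr),
\]
with $C$ depending on $\tau,m,\varepsilon,L_I$. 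The main obstacle is the term $L_I\|c-c_h\|\,\|\rho_c\|$, which contains the full error rather than $\theta_c$. It is handled by the triangle inequality: $\|c-c_h\|\le\|\rho_c\|+C\|\nabla\theta_c\|$, and then Young. If one prefers to avoid convexity altogether, the Lipschitz bound $L_I\|c-c_h\|\,\|\theta_c\|$ can instead be absorbed using the interpolation $\|\theta_c\|^2\le\|\theta_c\|_{-1,h}\|\nabla\theta_c\|$, at the cost of a $\tau$-dependent constant, which is allowed here.

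Finally, for $\theta_\mu$ we test the first error equation with $v_h=\theta_\mu$:
\[
  m\|\nabla\theta_\mu\|^2=-\tfrac1\tau\ip{\theta_c+\rho_c-e^n}{\theta_\mu}.
\]
All three functions paired with $\theta_\mu$ have zero mean, so we may replace $\theta_\mu$ by its mean-free part and use Poincar\'e, which gives
\[
  \|\nabla\theta_\mu\|\le C\tau^{-1}\bigl(\|\theta_c\|+\|\rho_c\|+\|e^n\|\bigr).
\]
The triangle inequality now delivers \eqref{eq:phase_est}. The $H^1$ statement follows from Poincar\'e, because $c-c_h$ has zero mean.
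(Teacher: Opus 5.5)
Your proof is correct and follows the paper's argument essentially step for step: Ritz splitting, testing with $\mathcal N_h\theta_c$ and $\theta_c$, convexity of $f_c$ combined with the Lipschitz bound via $\theta_c=(c-c_h)-\rho_c$, and a final test with the mean-free part of $\theta_\mu$. The differences are cosmetic: your Neumann-type Ritz projection removes the $\nabla\rho$ terms that the paper's $H^1$-Ritz projection keeps, and you bound $\nabla\theta_\mu$ via $L^2$ norms and Poincar\'e rather than through discrete $H^{-1}$ duality.

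One caveat concerns only your optional aside. Without convexity, absorbing $L_I\|\theta_c\|^2\le L_I\|\theta_c\|_{-1,h}\|\nabla\theta_c\|$ requires a step-size restriction of the form $\tau\lesssim\varepsilon^2/(mL_I^2)$, not merely a $\tau$-dependent constant. So the convexity route you actually use is what makes the estimate unconditional in $\tau$.
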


\begin{proof}
Let $R_h:H^1(\Omega)\to V_h$ be the mean-preserving Ritz projection associated
with
\[
  a(v,w):=\ip{\nabla v}{\nabla w}+\ip{v}{w}.
\]
Thus
\[
  \int_\Omega (R_hv-v)\,\dx=0,
  \qquad
  \|v-R_hv\|_{H^1(\Omega)}
  \le Ch^{r_c}\|v\|_{H^{r_c+1}(\Omega)}.
\]
Write
\[
  c-c_h=\rho_c+\theta_c,
  \qquad
  \rho_c:=c-R_hc,
  \qquad
  \theta_c:=R_hc-c_h,
\]
and
\[
  \mu-\mu_h=\rho_\mu+\theta_\mu,
  \qquad
  \rho_\mu:=\mu-R_h\mu,
  \qquad
  \theta_\mu:=R_h\mu-\mu_h.
\]
Since both $c$ and $c_h$ have the same mass and $R_h$ preserves the mean,
$\theta_c\in V_{h,0}$.

Subtracting the discrete mixed equations from the continuous ones gives, for all
$v_h,w_h\in V_h$,
\begin{align}
  \ip{\frac{\theta_c}{\tau}}{v_h}
  +m\ip{\nabla\theta_\mu}{\nabla v_h}
  &=
  -\ip{\frac{\rho_c-(c^n-c_h^n)}{\tau}}{v_h}
  -m\ip{\nabla\rho_\mu}{\nabla v_h},
  \label{eq:phase_error_1}\\
  \ip{\theta_\mu}{w_h}
  -\varepsilon^2\ip{\nabla\theta_c}{\nabla w_h}
  &-\ip{G^n(c)-G_h^n(c_h)}{w_h}
  =
  -\ip{\rho_\mu}{w_h}
  +\varepsilon^2\ip{\nabla\rho_c}{\nabla w_h}.
  \label{eq:phase_error_2}
\end{align}
Testing \eqref{eq:phase_error_1} with
$v_h=\mathcal N_h\theta_c\in V_{h,0}$ and \eqref{eq:phase_error_2} with
$w_h=\theta_c$ gives
\begin{align}
  \frac1\tau\|\theta_c\|_{-1,h}^2
  +m\varepsilon^2\|\nabla\theta_c\|_{L^2(\Omega)}^2
  +m\ip{G^n(c)-G_h^n(c_h)}{\theta_c}
  =
  R_1(\mathcal N_h\theta_c)-mR_2(\theta_c),
  \label{eq:phase_energy_ineq}
\end{align}
where $R_1$ and $R_2$ denote the right-hand sides of
\eqref{eq:phase_error_1} and \eqref{eq:phase_error_2}, respectively. The
identity used to cancel the coupling term is
\(
  \ip{\nabla\theta_\mu}{\nabla\mathcal N_h\theta_c}
  =\ip{\theta_\mu}{\theta_c}
\)
which follows by applying \eqref{eq:Nh} to the mean-zero part of $\theta_\mu$.

The nonlinear term is decomposed as
\[
  G^n(c)-G_h^n(c_h)
  =
  \bigl(f_c'(c)-f_c'(c_h)\bigr)
  -
  \bigl(f_e'(c^n)-f_e'(c_h^n)\bigr).
\]
Since $f_c$ is convex,
\[
  \ip{f_c'(c)-f_c'(c_h)}{c-c_h}\ge0.
\]
Using $\theta_c=(c-c_h)-\rho_c$, the Lipschitz bound
\eqref{eq:local_Lip_fc_fe}, Poincar\'e's inequality on $V_{h,0}$, and Young's
inequality, we obtain, for every $\delta>0$,
\[
\begin{aligned}
  &\left|
    \ip{f_c'(c)-f_c'(c_h)}{\rho_c}
  \right|
  +
  \left|
    \ip{f_e'(c^n)-f_e'(c_h^n)}{\theta_c}
  \right|
  \\ &\quad\le
  \delta \|\nabla\theta_c\|_{L^2(\Omega)}^2
  +
  C_\delta\left(
    \|\rho_c\|_{H^1(\Omega)}^2
    +
    \|c^n-c_h^n\|_{L^2(\Omega)}^2
  \right).
  \end{aligned}
\]
The residual terms are bounded by
\[
  |R_1(\mathcal N_h\theta_c)|
  \le
  C\left(
    \|\rho_c\|_{-1,h}
    +\|c^n-c_h^n\|_{-1,h}
    +\|\rho_\mu\|_{H^1(\Omega)}
  \right)\|\theta_c\|_{-1,h},
\]
and
\[
  |R_2(\theta_c)|
  \le
  C\left(
    \|\rho_\mu\|_{L^2(\Omega)}
    +\|\rho_c\|_{H^1(\Omega)}
  \right)\|\nabla\theta_c\|_{L^2(\Omega)}.
\]
Choosing $\delta>0$ sufficiently small in \eqref{eq:phase_energy_ineq} and using
Young's inequality yields
\[
  \|\theta_c\|_{-1,h}
  +\|\nabla\theta_c\|_{L^2(\Omega)}
  \le
  C\Bigl(
    \|\rho_c\|_{H^1(\Omega)}
    +\|\rho_\mu\|_{H^1(\Omega)}
    +\|c^n-c_h^n\|_{-1,h}
    +\|c^n-c_h^n\|_{L^2(\Omega)}
  \Bigr).
\]
The Ritz approximation estimates give the corresponding estimates for
$c-c_h$.

It remains to estimate the gradient of the chemical-potential error. Let
$\Pi_0\theta_\mu\in V_{h,0}$ denote the mean-zero part of $\theta_\mu$. Since
$\nabla\Pi_0\theta_\mu=\nabla\theta_\mu$, testing \eqref{eq:phase_error_1} with
$v_h=\Pi_0\theta_\mu$ gives
\[
  m\|\nabla\theta_\mu\|_{L^2(\Omega)}^2
  \le
  C\left(
    \|\theta_c\|_{-1,h}
    +\|\rho_c\|_{-1,h}
    +\|c^n-c_h^n\|_{-1,h}
    +\|\rho_\mu\|_{H^1(\Omega)}
  \right)
  \|\nabla\theta_\mu\|_{L^2(\Omega)}.
\]
The already established estimate for $\theta_c$ and the Ritz approximation
bounds yield the estimate for $\|\nabla(\mu-\mu_h)\|_{L^2(\Omega)}$.
\end{proof}

\begin{lemma}[Mean value of the chemical-potential error]
\label{lem:mu_mean_error}
Assume the hypotheses of Theorem~\ref{thm:phase_error}. Then
\[
  \left|
    \frac1{|\Omega|}\int_\Omega(\mu-\mu_h)\,\dx
  \right|
  \le
  C\left(
    \|c-c_h\|_{L^2(\Omega)}
    +
    \|c^n-c_h^n\|_{L^2(\Omega)}
  \right),
\]
where $C$ is independent of $h$. Consequently,
\[
  \|\mu-\mu_h\|_{H^1(\Omega)}
  \le
  C\left(
    \|\nabla(\mu-\mu_h)\|_{L^2(\Omega)}
    +
    \|c-c_h\|_{L^2(\Omega)}
    +
    \|c^n-c_h^n\|_{L^2(\Omega)}
  \right).
\]
\end{lemma}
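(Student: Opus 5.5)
The plan is to test the constitutive error equation \eqref{eq:phase_error_2} with the constant function $w_h=1\in V_h$. This isolates the mean of the chemical-potential error, since every gradient term vanishes. Equivalently, subtract the discrete constitutive equation from \eqref{eq:classical2}, both tested with $w=w_h=1$. Because $\nabla 1=0$, this yields the exact identity
\[
  \int_\Omega(\mu-\mu_h)\,\dx
  =
  \ip{G^n(c)-G_h^n(c_h)}{1}
  =
  \ip{f_c'(c)-f_c'(c_h)}{1}
  -\ip{f_e'(c^n)-f_e'(c_h^n)}{1}.
\]
We do not split into Ritz-projection parts here. We work directly with $\mu-\mu_h$, which avoids any need to control $\int_\Omega\rho_\mu\,\dx$ separately.

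Next, we bound both nonlinear differences using the local Lipschitz hypothesis \eqref{eq:local_Lip_fc_fe} on the interval $I$, which by assumption contains the ranges of $c,c_h,c^n,c_h^n$. Pointwise,
\[
  |f_c'(c)-f_c'(c_h)|\le L_I|c-c_h|
  \quad\text{and}\quad
  |f_e'(c^n)-f_e'(c_h^n)|\le L_I|c^n-c_h^n|.
\]
Cauchy--Schwarz against the constant $1$ then gives
\[
  \Bigl|\int_\Omega(\mu-\mu_h)\,\dx\Bigr|\le L_I|\Omega|^{1/2}\bigl(\|c-c_h\|_{L^2}+\|c^n-c_h^n\|_{L^2}\bigr).
\]
Dividing by $|\Omega|$ proves the first claim with $C=L_I|\Omega|^{-1/2}$, which is independent of $h$.

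For the second claim, we decompose $\mu-\mu_h=(\mu-\mu_h-\overline{\mu-\mu_h})+\overline{\mu-\mu_h}$. Poincar\'e--Wirtinger on the connected domain bounds the first part in $H^1$ by $C\|\nabla(\mu-\mu_h)\|_{L^2}$. The constant part has $H^1$ norm $|\Omega|^{1/2}|\overline{\mu-\mu_h}|$, which we bound by the first claim. The triangle inequality then concludes.

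The only delicate point is to make sure the test with $w_h=1$ is legitimate. It is, because $1\in V_h$ for Lagrange spaces and $1\in H^1(\Omega)$ in \eqref{eq:classical2}. We must also make sure the Lipschitz estimate is applied only on $I$, which the standing assumption guarantees. No argument beyond that is needed.
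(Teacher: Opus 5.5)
Your proof is correct and follows essentially the same route as the paper: test the continuous and discrete constitutive equations with the constant $1$, subtract, apply the local Lipschitz bound \eqref{eq:local_Lip_fc_fe} together with Cauchy--Schwarz, and finish with Poincar\'e--Wirtinger on the mean-free part. The only cosmetic difference is which form of the discrete constitutive equation you test. The paper tests the four-field equation \eqref{eq:d4f2} and uses $\int_\Omega(2p_h+\Div\bm u_h)\,\dx=0$, whereas you test the standard conforming constitutive equation (through \eqref{eq:phase_error_2}) and use $\nabla 1=0$; the two are equivalent by Theorem~\ref{thm:disc_wp}.
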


\begin{proof}
Testing the continuous constitutive equation with $1$ gives
\[
  \int_\Omega \mu\,\dx
  =
  \int_\Omega G^n(c)\,\dx,
\]
because $\ip{\nabla c}{\nabla 1}=0$. Testing the discrete constitutive equation
\eqref{eq:d4f2} with $1\in V_h$ gives
\[
  \int_\Omega \mu_h\,\dx
  =
  \int_\Omega G_h^n(c_h)\,\dx,
\]
since $p_h\in Q_h\subset L^2_0(\Omega)$ and
$\bm u_h\in[H^1_0(\Omega)]^2$ imply
\[
  \int_\Omega (2p_h+\Div\bm u_h)\,\dx=0.
\]
Therefore
\[
  \int_\Omega(\mu-\mu_h)\,\dx
  =
  \int_\Omega
  \left[
    f_c'(c)-f_c'(c_h)
    -
    \bigl(f_e'(c^n)-f_e'(c_h^n)\bigr)
  \right]\,\dx .
\]
The local Lipschitz assumption \eqref{eq:local_Lip_fc_fe} yields
\[
  \left|
    \int_\Omega(\mu-\mu_h)\,\dx
  \right|
  \le
  C\left(
    \|c-c_h\|_{L^2(\Omega)}
    +
    \|c^n-c_h^n\|_{L^2(\Omega)}
  \right).
\]
The final estimate follows from Poincar\'e's inequality applied to
\[
  \mu-\mu_h
  -
  \frac1{|\Omega|}\int_\Omega(\mu-\mu_h)\,\dx .
\]
\end{proof}

\subsection{Error estimate for the auxiliary block}

Let \(c\in H^2_N(\Omega)\) and set
\(
  r:=-\Delta c\in Q_0
\).
Let \((p,\bm u)\in Q_0\times\bm U\) be the continuous auxiliary pair associated
with \(r\), i.e.,
\begin{align}
  \ip{2p+\Div\bm u}{q}
  &=\ip{r}{q}
  &&\forall\,q\in Q_0,
  \label{eq:aux_cont_1}\\
  \ip{\Def\bm u}{\Def\bm z}
  +\ip{p}{\Div\bm z}
  &=0
  &&\forall\,\bm z\in\bm U.
  \label{eq:aux_cont_2}
\end{align}
Given the discrete phase field \(c_h\), define the discrete weak Laplacian
representative \(r_h\in Q_h\) by
\begin{equation}\label{eq:rh_definition_apriori}
  \ip{r_h}{q_h}
  =
  \ip{\nabla c_h}{\nabla q_h}
  \qquad \forall\,q_h\in Q_h.
\end{equation}
Let \((p_h,\bm u_h)\in Q_h\times\bm U_h\) be the discrete auxiliary pair with
right-hand side \(r_h\).
We introduce the scalar consistency defect
\begin{equation}\label{eq:eta_lap_def}
  \eta_{\Delta,h}
  :=
  \|r-r_h\|_{L^2(\Omega)}.
\end{equation}
This quantity measures how well the discrete weak Laplacian representative
recovers the exact scalar Laplacian. It is a natural quantity in the present
formulation, since the auxiliary block is stable with respect to the \(L^2\)
right-hand side \(r\).

\begin{theorem}[Auxiliary block estimate]
\label{thm:aux_error}
Let
\[
  p\in H^{r_p+1}(\Omega)\cap L^2_0(\Omega),
  \qquad
  \bm u\in [H^{r_u+1}(\Omega)]^2\cap [H^1_0(\Omega)]^2.
\]
Then there exists a constant \(C>0\), independent of \(h\), such that
\begin{equation}
\begin{aligned}
  \|p&-p_h\|_{L^2(\Omega)}
  + \|\bm u-\bm u_h\|_{H^1(\Omega)}
  \\ &\le
  C\Bigl(
    \inf_{q_h\in Q_h}\|p-q_h\|_{L^2(\Omega)}
    + \inf_{\bm z_h\in\bm U_h}\|\bm u-\bm z_h\|_{H^1(\Omega)}
    + \eta_{\Delta,h}
  \Bigr).
\end{aligned}
  \label{eq:aux_est}
\end{equation}
Consequently,
\begin{equation}
\begin{aligned}
  \|p&-p_h\|_{L^2(\Omega)}
  + \|\bm u-\bm u_h\|_{H^1(\Omega)}
  \\ &\le
  C\Bigl(
    h^{r_p+1}\|p\|_{H^{r_p+1}(\Omega)}
    + h^{r_u}\|\bm u\|_{H^{r_u+1}(\Omega)}
    + \eta_{\Delta,h}
  \Bigr).
\end{aligned}
  \label{eq:aux_rate}
\end{equation}
\end{theorem}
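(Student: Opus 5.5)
The argument is a Strang-type estimate built on the coercivity of the symmetric bilinear form $\mathcal A$ from Section~\ref{sec:disc}, not on an inf--sup argument. The continuous pair $(p,\bm u)$ solves $\mathcal A((p,\bm u),(q,\bm z))=\ip{r}{q}$ for all $(q,\bm z)\in Q_0\times\bm U$; this is \eqref{eq:aux_cont_1} together with \eqref{eq:aux_cont_2}. The discrete pair solves $\mathcal A((p_h,\bm u_h),(q_h,\bm z_h))=\ip{r_h}{q_h}$ on $Q_h\times\bm U_h$. Since $Q_h\times\bm U_h\subset Q_0\times\bm U$ is conforming, subtracting gives the perturbed Galerkin orthogonality
\[
  \mathcal A\bigl((p-p_h,\bm u-\bm u_h),(q_h,\bm z_h)\bigr)=\ip{r-r_h}{q_h}
  \qquad\forall\,(q_h,\bm z_h)\in Q_h\times\bm U_h .
\]

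Next fix arbitrary $(\tilde q_h,\tilde{\bm z}_h)\in Q_h\times\bm U_h$ and split the error into an approximation part $(p-\tilde q_h,\bm u-\tilde{\bm z}_h)$ and a discrete part $(\xi_h,\bm e_h):=(\tilde q_h-p_h,\tilde{\bm z}_h-\bm u_h)$. Test the orthogonality with $(\xi_h,\bm e_h)$. The coercivity of $\mathcal A$ on $Q_h\times\bm U_h$, with a mesh-independent constant $\alpha$ in the product norm, bounds $\alpha(\|\xi_h\|_{L^2}+\|\bm e_h\|_{H^1})^2$ by
\[
  \mathcal A\bigl((\xi_h,\bm e_h),(\xi_h,\bm e_h)\bigr)
  =-\mathcal A\bigl((p-\tilde q_h,\bm u-\tilde{\bm z}_h),(\xi_h,\bm e_h)\bigr)+\ip{r-r_h}{\xi_h}.
\]
The continuity of $\mathcal A$, via $\|\Div\bm v\|_{L^2}\le\sqrt2\|\bm v\|_{H^1}$ and Cauchy--Schwarz, then gives
\[
  \|\xi_h\|_{L^2}+\|\bm e_h\|_{H^1}\le C\bigl(\|p-\tilde q_h\|_{L^2}+\|\bm u-\tilde{\bm z}_h\|_{H^1}+\eta_{\Delta,h}\bigr).
\]
The triangle inequality and taking the infimum over $(\tilde q_h,\tilde{\bm z}_h)$ yield \eqref{eq:aux_est}. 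The two infima decouple because they are taken independently.

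The rate \eqref{eq:aux_rate} then follows from \eqref{eq:approx_Q} and \eqref{eq:approx_U}. One detail needs care: \eqref{eq:approx_Q} is stated for $Q_h$, which carries a mean-zero constraint. If the approximant comes from the unconstrained Lagrange space, subtracting its mean keeps it in $Q_h$. The error does not grow, because $p$ has zero mean and the correction is the $L^2$-projection onto constants, which is a contraction.

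I expect the main obstacle to be the coercivity constant. It has to hold uniformly on $Q_h\times\bm U_h$ and in the full product norm, not merely in the energy seminorm $\tfrac12\|2p+\Div\bm u\|^2+\|\dev\Def\bm u\|^2$. The identity for $\mathcal A((p,\bm u),(p,\bm u))$ and the triangle-inequality argument in Section~\ref{sec:disc} give exactly this. They rest only on the trace-free Korn inequality \eqref{Eq:Korn}, which is inherited by the conforming subspace, so the constant $\alpha$ is $h$-independent. If one prefers to avoid the product-norm coercivity, the same estimate can be obtained from the reduced formulation used in the proof of Proposition~\ref{prop:disc_RZblock}. That route is messier, however, because the projection $\Pi_{Q_h}$ then appears in the consistency error, so I would use the $\mathcal A$-based argument.
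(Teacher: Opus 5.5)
Your proof is correct and uses the same ingredients as the paper's: the conformity $Q_h\times\bm U_h\subset Q_0\times\bm U$ and the mesh-independent product-norm coercivity of $\mathcal A$, which comes from the trace-free Korn inequality. The difference is only in organisation. The paper inserts the intermediate Galerkin solution with projected data $\Pi_{Q_h}r$, applies C\'ea's lemma to it, and then bounds its distance to $(p_h,\bm u_h)$ using the discrete stability of Proposition~\ref{prop:disc_RZblock} together with $\|\Pi_{Q_h}r-r_h\|_{L^2(\Omega)}\le\eta_{\Delta,h}$; you instead combine both steps into a single first-Strang-type estimate based on the perturbed Galerkin orthogonality.
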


\begin{proof}
Let \((\widehat p_h,\widehat{\bm u}_h)\in Q_h\times\bm U_h\) denote the discrete
auxiliary solution with the projected exact right-hand side
\(
  \Pi_{Q_h} r\in Q_h
\).
By the continuous and discrete reduced coercive formulations from
Proposition~\ref{prop:RZblock} and Proposition~\ref{prop:disc_RZblock}, the
Galerkin approximation with right-hand side \(\Pi_{Q_h}r\) satisfies the
quasi-optimal estimate
\[
  \|p-\widehat p_h\|_{L^2(\Omega)}
  +\|\bm u-\widehat{\bm u}_h\|_{H^1(\Omega)}
  \le
  C\left(
    \inf_{q_h\in Q_h}\|p-q_h\|_{L^2(\Omega)}
    +
    \inf_{\bm z_h\in\bm U_h}\|\bm u-\bm z_h\|_{H^1(\Omega)}
  \right).
\]
Here we use that the auxiliary block \eqref{eq:RZblock1}--\eqref{eq:RZblock2} is a
conforming Galerkin discretization of a symmetric, coercive problem. Indeed, the
associated bilinear form
\[
  \mathcal A\bigl((p,\bm u),(q,\bm z)\bigr)
  :=
  2\ip{p}{q}+\ip{\Div\bm u}{q}+\ip{p}{\Div\bm z}+\ip{\Def\bm u}{\Def\bm z}
\]
is symmetric and satisfies
\[
  \mathcal A\bigl((p,\bm u),(p,\bm u)\bigr)
  =
  \tfrac12\|2p+\Div\bm u\|_{L^2(\Omega)}^2
  +\|\dev\Def\bm u\|_{L^2(\Omega)}^2 .
\]
The trace-free Korn inequality used in Proposition~\ref{prop:RZblock} gives
\[
  \|\bm u\|_{H^1(\Omega)}
  \le C\|\dev\Def\bm u\|_{L^2(\Omega)}.
\]
Moreover,
\[
  2\|p\|_{L^2(\Omega)}
  \le
  \|2p+\Div\bm u\|_{L^2(\Omega)}
  +\|\Div\bm u\|_{L^2(\Omega)}
  \le
  \|2p+\Div\bm u\|_{L^2(\Omega)}
  +C\|\bm u\|_{H^1(\Omega)}.
\]
Thus \(\mathcal A\) is coercive on \(Q_0\times\bm U\) with respect to
\(\|p\|_{L^2(\Omega)}+\|\bm u\|_{H^1(\Omega)}\). Since
\(Q_h\times\bm U_h\subset Q_0\times\bm U\), coercivity is inherited on
\(Q_h\times\bm U_h\) with a mesh-independent constant. C\'ea's lemma then yields
the quasi-optimal estimate above.

It remains to compare
\((\widehat p_h,\widehat{\bm u}_h)\) with \((p_h,\bm u_h)\).
By the stability estimate in Proposition~\ref{prop:disc_RZblock},
\[
  \|\widehat p_h-p_h\|_{L^2(\Omega)}
  +\|\widehat{\bm u}_h-\bm u_h\|_{H^1(\Omega)}
  \le
  C\|\Pi_{Q_h}r-r_h\|_{L^2(\Omega)}.
\]
Since \(\Pi_{Q_h}\) is the \(L^2\)-orthogonal projection,
\[
  \|\Pi_{Q_h}r-r_h\|_{L^2(\Omega)}
  \le
  \|r-r_h\|_{L^2(\Omega)}
  =
  \eta_{\Delta,h}.
\]
The triangle inequality gives \eqref{eq:aux_est}, and the approximation
properties \eqref{eq:approx_Q}--\eqref{eq:approx_U} yield
\eqref{eq:aux_rate}.
\end{proof}

\begin{remark}
The estimate \eqref{eq:aux_est} separates the approximation of the
auxiliary variables from the scalar weak-Laplacian consistency defect
\(\eta_{\Delta,h}\).
This is important because the auxiliary block is stable with respect to the
\(L^2\)-right-hand side \(r=-\Delta c\), whereas \(r_h\) is obtained from the
discrete weak relation \eqref{eq:rh_definition_apriori}.
In particular, the estimate shows the consistency issue inside a
stronger but generally unavailable bound purely in terms of
\(\|\nabla(c-c_h)\|_{L^2}\).
\end{remark}

\subsection{Combined one-step estimate}

Combining the standard phase-field estimate with the auxiliary-block estimate
gives the following one-step bound for the full four-field discretization. The
estimate is unconditional for the phase-field variables in the usual
convex-splitting sense, under the stated regularity assumptions, but conditional
for the auxiliary variables through the explicit scalar recovery defect
\(\eta_{\Delta,h}\).

\begin{theorem}[Conditional four-field one-step estimate]
\label{thm:full_apriori}
Assume the hypotheses of Theorems~\ref{thm:phase_error} and
\ref{thm:aux_error}. Then there exists a constant $C>0$, independent of $h$, but
possibly depending on the constants appearing in Theorem~\ref{thm:phase_error},
such that
\begin{equation}
\begin{aligned}
  \|c-c_h\|_{-1,h}
  &+ \|c-c_h\|_{H^1(\Omega)}
  + \|\mu-\mu_h\|_{H^1(\Omega)}
  + \|p-p_h\|_{L^2(\Omega)}
  + \|\bm u-\bm u_h\|_{H^1(\Omega)}
\\
  \le
  C\Bigl(
    &h^{r_c}\|c\|_{H^{r_c+1}(\Omega)}
    + h^{r_c}\|\mu\|_{H^{r_c+1}(\Omega)}
    + h^{r_p+1}\|p\|_{H^{r_p+1}(\Omega)}
 \\
  &+ h^{r_u}\|\bm u\|_{H^{r_u+1}(\Omega)}+ \|c^n-c_h^n\|_{-1,h}
    + \|c^n-c_h^n\|_{L^2(\Omega)}
    + \eta_{\Delta,h}
  \Bigr),
\end{aligned}
  \label{eq:full_apriori}
\end{equation}
where
\(
  \eta_{\Delta,h}
  =
  \|-\Delta c-r_h\|_{L^2(\Omega)}
\)
and $r_h\in Q_h$ is defined by \eqref{eq:rh_definition_apriori}.
\end{theorem}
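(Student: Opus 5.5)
The estimate \eqref{eq:full_apriori} is obtained by adding three bounds proved earlier and then collecting terms. No new error equation is needed.

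\emph{Phase field.} Theorem~\ref{thm:phase_error} controls $\|c-c_h\|_{-1,h}$, $\|\nabla(c-c_h)\|_{L^2(\Omega)}$ and $\|\nabla(\mu-\mu_h)\|_{L^2(\Omega)}$. By Theorem~\ref{thm:disc_wp}, the first two components of the four-field discrete solution coincide with the standard conforming mixed solution, so that theorem applies. Write $\mathcal R$ for its right-hand side. Since $c-c_h$ has zero mean, Poincar\'e's inequality on $L^2_0(\Omega)\cap H^1(\Omega)$ gives $\|c-c_h\|_{H^1(\Omega)}\le C\|\nabla(c-c_h)\|_{L^2(\Omega)}\le C\mathcal R$.

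\emph{Chemical potential.} Lemma~\ref{lem:mu_mean_error} gives
\[
  \|\mu-\mu_h\|_{H^1(\Omega)}\le C\bigl(\|\nabla(\mu-\mu_h)\|_{L^2(\Omega)}+\|c-c_h\|_{L^2(\Omega)}+\|c^n-c_h^n\|_{L^2(\Omega)}\bigr).
\]
The first two terms on the right are bounded by $C\mathcal R$ from the previous step. The last term already appears in $\mathcal R$. Hence $\|\mu-\mu_h\|_{H^1(\Omega)}\le C\mathcal R$.

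\emph{Auxiliary block.} We must check that the pair $(p_h,\bm u_h)$ in the four-field scheme is the same pair used in Theorem~\ref{thm:aux_error}. This holds by construction. Equation \eqref{eq:d4f3} is exactly the discrete block \eqref{eq:disc_block1} with the datum $r_h$ from \eqref{eq:rh_definition_apriori}, built from the computed $c_h$. Uniqueness in Proposition~\ref{prop:disc_RZblock} then identifies the two pairs. On the continuous side, $(p,\bm u)$ is the pair of Theorem~\ref{thm:fourfield} with $r=-\Delta c$. Therefore \eqref{eq:aux_rate} applies directly and bounds
\[
  \|p-p_h\|_{L^2(\Omega)}+\|\bm u-\bm u_h\|_{H^1(\Omega)}\le C\bigl(h^{r_p+1}\|p\|_{H^{r_p+1}(\Omega)}+h^{r_u}\|\bm u\|_{H^{r_u+1}(\Omega)}+\eta_{\Delta,h}\bigr).
\]

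\emph{Conclusion and constants.} Summing the three bounds and taking the maximum of the constants yields \eqref{eq:full_apriori}. The constant from Theorem~\ref{thm:aux_error} depends only on $\Omega$, through the trace-free Korn inequality and the continuity of $\mathcal A$. The other two constants depend on $\Omega$, $m$, $\tau$, $\varepsilon$ and $I$, all independent of $h$.

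The step needing care is keeping $\eta_{\Delta,h}$ as a separate, unexpanded term. We do not try to bound it by $\|\nabla(c-c_h)\|_{L^2(\Omega)}$, since such a bound is generally unavailable (see the remark after Theorem~\ref{thm:aux_error}). This is exactly why the estimate is \emph{conditional} for the auxiliary variables, while the phase-field part remains unconditional.
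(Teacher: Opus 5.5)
Your proposal is correct and follows the paper's own proof. Like the paper, you combine \eqref{eq:phase_est} (with Poincar\'e's inequality for the zero-mean error $c-c_h$), Lemma~\ref{lem:mu_mean_error} for the full $H^1$-norm of $\mu-\mu_h$, and \eqref{eq:aux_rate} for the auxiliary pair; your explicit identification of $(p_h,\bm u_h)$ as the block solution with datum $r_h$, and of $(c_h,\mu_h)$ as the standard conforming solution via Theorem~\ref{thm:disc_wp} under the standing inclusion \eqref{eq:Q_contains_Vh0}, just makes explicit what the paper leaves implicit.
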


\begin{proof}
The estimate follows by combining \eqref{eq:phase_est} with
\eqref{eq:aux_rate}. The $H^1$-bound for $c-c_h$ follows from the gradient
estimate and the fact that $c-c_h$ has zero mean. The full $H^1$-bound for
$\mu-\mu_h$ follows from Theorem~\ref{thm:phase_error},
Lemma~\ref{lem:mu_mean_error}, and the already obtained $H^1$-bound for
$c-c_h$.
\end{proof}

\begin{remark}[Relation to the standard convex-splitting estimate and the recovery defect]
\label{rem:standard_vs_auxiliary_error}
The conditional character of Theorem~\ref{thm:full_apriori} concerns only the
auxiliary variables. Indeed, by Theorem~\ref{thm:disc_wp}, the first two
components \((c_h,\mu_h)\) of the four-field method coincide with the standard
conforming finite element solution of the convex-splitting mixed
Cahn--Hilliard step. Consequently, the estimate for the phase-field variables
\[
  \|c-c_h\|_{-1,h}
  +\|c-c_h\|_{H^1(\Omega)}
  +\|\mu-\mu_h\|_{H^1(\Omega)}
\]
is precisely the standard one-step conforming estimate for the underlying
convex-splitting scheme, under the regularity and local Lipschitz assumptions
stated in Theorem~\ref{thm:phase_error}. No additional consistency condition
from the auxiliary block is needed for these variables.

The additional term
\[
  \eta_{\Delta,h}
  =
  \|-\Delta c-r_h\|_{L^2(\Omega)}
\]
enters only when estimating the auxiliary pair \((p_h,\bm u_h)\). This is
because the auxiliary block is stable with respect to its scalar \(L^2\)
right-hand side, whereas in the actual finite element method this right-hand
side is not the exact quantity \(-\Delta c\), but the weakly recovered
representative \(r_h\in Q_h\) defined by
\eqref{eq:rh_definition_apriori}. Indeed,
\[
  \ip{r_h-\Pi_{Q_h}(-\Delta c)}{q_h}
  =
  \ip{\nabla(c_h-c)}{\nabla q_h}
  \qquad \forall\,q_h\in Q_h,
\]
and an inverse estimate gives
\[
  \eta_{\Delta,h}
  \le
  C\left(
    h^{r_p+1}\|-\Delta c\|_{H^{r_p+1}(\Omega)}
    +
    h^{-1}\|\nabla(c-c_h)\|_{L^2(\Omega)}
  \right).
\]
Therefore \(\eta_{\Delta,h}\) need not yield a closed positive convergence rate
for the actual weak recovery when \(r_c=r_p=1\). The theorem should therefore be
read as a standard phase-field estimate combined with a stable auxiliary-block
estimate containing an explicit scalar recovery defect. This is consistent with the equivalence result:
the auxiliary variables do not alter the phase-field update, but their own
\(L^2\)-based error estimate necessarily contains the scalar recovery defect.
\end{remark}

\section{Numerical experiments}\label{sec:numerics}

This section contains two numerical tests for the four-field
formulation.
First, we perform a manufactured one-step convergence study in order to verify
the spatial behavior of the discretization. The auxiliary part of this test is
a forced approximation study and should be distinguished from the conditional
estimate of Section~\ref{sec:apriori}.
Second, we consider a transient spinodal-decomposition-type experiment whose
purpose is to illustrate the qualitative behavior of the method and the
projected consistency of the auxiliary recovery.
All computations use the concrete finite element choices $r_c=1$, $r_p=1$ and $r_u=2$,
that is,
\[
  c_h,\mu_h\in V_h^1,\qquad
  p_h\in Q_h^1,\qquad
  \bm u_h\in [V_h^2\cap H^1_0(\Omega)]^2.
\]
The implementation in \cite{rathgeber2016firedrake} uses a
direct LU factorization for both the Cahn--Hilliard solve and the auxiliary
recovery block.

\subsection{Manufactured one-step convergence study}

We consider the unit square
\(
  \Omega=(0,1)^2
\)
and choose smooth manufactured exact fields
$c^n$, $c$, $\mu$, $p$ and $\bm u$
such that
$\partial_n c = 0$ and $\partial_n \mu = 0$ on $\partial\Omega$,
the auxiliary scalar variable $p$ has zero mean 
and the auxiliary vector field satisfies
$\bm u=0$ on $\partial\Omega$.
The forcing terms are generated by inserting these exact fields into the weak
four-field formulation, so that the quadruple
\(
  (c,\mu,p,\bm u)
\)
solves the discrete-in-time continuous problem by construction.
The tests are carried out on a sequence of uniform triangulations with
\(
  h = 2^{-n} \), 
$n\in\{3,4,5,6\}$.
The parameters are fixed as
$\varepsilon^2 = 10^{-3}$,
  $\tau = 10^{-3}$,
  $m=1$,
and we use the convex-splitting-type constitutive term
\(
  G^n(c)=c^3+2c-3c^n
\).
For the phase field and chemical potential we report
\(
  \|c-c_h\|_{H^1(\Omega)}\) and \(
  \|\mu-\mu_h\|_{H^1(\Omega)}\),
while for the auxiliary variables we report
\(
  \|p-p_h\|_{L^2(\Omega)}\) and \(
  \|\bm u-\bm u_h\|_{H^1(\Omega)}\).

The manufactured test is a forced one-step verification of the full four-field
system. Table~\ref{tab:manufactured_convergence} reports the resulting errors and
experimental rates.
The auxiliary equations are driven by the manufactured
continuous right-hand side obtained from the exact fields, rather than only by
the phase-field error generated in an unforced time step.
Thus the auxiliary variables display the intrinsic approximation behavior of
the chosen \(Q_h\times\bm U_h\) pair.
This explains why the observed auxiliary rates are close to second order, while
the general coupled estimate \eqref{eq:full_apriori} contains the additional
scalar consistency defect \(\eta_{\Delta,h}\).

\begin{table}[htbp]
\centering
\small
\caption{Manufactured one-step convergence study for the  four-field
formulation.}
\label{tab:manufactured_convergence}
\begin{tabular}{c cc cc cc cc}
\hline
$n$
& $\|c-c_h\|_{H^1}$
& rate
& $\|\mu-\mu_h\|_{H^1}$
& rate
& $\|p-p_h\|_{L^2}$
& rate
& $\|\bm u-\bm u_h\|_{H^1}$
& rate
\\
\hline
3
  & $2.032\cdot 10^{-1}$
  & --
  & $2.480\cdot 10^{-1}$
  & --
  & $8.705\cdot 10^{-1}$
  & --
  & $8.430\cdot 10^{-1}$
  & --
\\
4
  & $1.013\cdot 10^{-1}$
  & $1.00$
  & $1.255\cdot 10^{-1}$
  & $0.98$
  & $2.489\cdot 10^{-1}$
  & $1.81$
  & $2.426\cdot 10^{-1}$
  & $1.80$
\\
5
  & $5.058\cdot 10^{-2}$
  & $1.00$
  & $6.271\cdot 10^{-2}$
  & $1.00$
  & $6.381\cdot 10^{-2}$
  & $1.96$
  & $6.216\cdot 10^{-2}$
  & $1.96$
\\
6
  & $2.528\cdot 10^{-2}$
  & $1.00$
  & $3.135\cdot 10^{-2}$
  & $1.00$
  & $1.606\cdot 10^{-2}$
  & $1.99$
  & $1.563\cdot 10^{-2}$
  & $1.99$
\\
\hline
\end{tabular}
\end{table}

In total, the observed rates are consistent with the discretization and with the forced
manufactured setup.
The phase-field variables \(c_h\) and \(\mu_h\) converge with first order in the
\(H^1\)-norm, as expected for conforming piecewise affine approximation.
The auxiliary variables show nearly second-order convergence in the reported
norms; this reflects the smooth manufactured data and the intrinsic
approximation properties of the chosen \(Q_h^1\times [V_h^2]^2\) auxiliary pair.

\begin{figure}[htbp]
\centering
\includegraphics[width=.9\textwidth,page=1]{Figures.pdf}
\caption{Manufactured one-step convergence study.
Left: phase-field errors with a reference slope proportional to \(h\).
Right: auxiliary-variable errors with a reference slope proportional to \(h^2\).
The observed decay is first order for \(c_h\) and \(\mu_h\) in \(H^1\), and close
to second order for \(p_h\) in \(L^2\) and \(\bm u_h\) in \(H^1\).}
\label{fig:manufactured_convergence}
\end{figure}

\subsection{Transient experiment}

We next illustrate the four-field recovery in a transient
spinodal-decom\-po\-sition-type experiment on the unit square \(\Omega=(0,1)^2\).
The parameters are chosen as
$\varepsilon^2 = 10^{-3}$, $\tau = 10^{-3}$, $m=1$ and $T=1$
on a uniform triangulation with \(100\times100\) cells.
For the constitutive term we again use
$G^n(c_h)=c_h^3+2c_h-3c_h^n$.
The initial condition is a small random perturbation of the nearly homogeneous
state \(c\equiv -0.1\).

As seen in Figure~\ref{fig:energy_massdrift}, the experiment confirms the expected qualitative behavior.
The discrete mass is conserved up to machine precision: 
the absolute mass drift at \(T=1\) is at the level of double-precision unit
roundoff and is therefore not numerically meaningful as a finer quantity.
At the same time, the discrete free energy decreases monotonically 
in agreement with the gradient-flow structure of the convex-splitting
Cahn--Hilliard step.

\begin{figure}[htbp]
\centering
\includegraphics[width=.9\textwidth,page=2]{Figures.pdf}
\caption{Discrete free-energy history and absolute mass drift for the transient
four-field recovery experiment.
The free energy decreases monotonically, while the mass drift remains at the
level of machine precision throughout the computation.}
\label{fig:energy_massdrift}
\end{figure}

For the auxiliary recovery we compare the two scalar quantities in the discrete
finite element space. 
We compare the projected auxiliary quantity
\(
  \Pi_{Q_h}(2p_h+\Div\bm u_h)\in Q_h
\)
and the discrete weak Laplacian representative \(g_h\in Q_h\) (the transient
counterpart of \(r_h\) from \eqref{eq:rh_definition_apriori}), defined by
\begin{equation}\label{eq:gh_transient}
  \ip{g_h}{q_h}
  =
  \ip{\nabla c_h}{\nabla q_h}
  \qquad \forall\,q_h\in Q_h.
\end{equation}
This leads to the projected auxiliary defect
\[
  \eta_{\mathrm{aux}}
  :=
  \|\Pi_{Q_h}(2p_h+\Div\bm u_h)-g_h\|_{L^2(\Omega)}.
\]
Likewise, since the constitutive equation is enforced weakly, we monitor only
its projected mean-zero component. This distinction is important. The diffusion
equation determines the chemical potential only up to an additive constant,
whereas the full discrete constitutive equation \eqref{eq:d4f2}, tested with
constants, fixes the mean value of $\mu_h$. The quantity
$\mu_h-\varepsilon^2 g_h-\Pi_{Q_h}G^n(c_h)$ therefore contains the constant part of
$\mu_h$ and is not expected to vanish.
We therefore define the projected constitutive defect by
\[
  \eta_{\mathrm{con}}
  :=
  \|\Pi_{Q_h}\mu_h-\varepsilon^2 g_h-\Pi_{Q_h}G^n(c_h)\|_{L^2(\Omega)}.
\] 
For the
choice used in this experiment, $r_c=r_p=1$, one has $Q_h=V_{h,0}$. Hence
testing \eqref{eq:d4f2} with mean-zero functions and using \eqref{eq:d4f3} gives
\[
  \Pi_{Q_h}\mu_h
  =
  \varepsilon^2 g_h+\Pi_{Q_h}G^n(c_h)
  \qquad\text{in }Q_h,
\]
up to the algebraic solver tolerance. Thus $\eta_{\mathrm{con}}$ verifies the
projected constitutive consistency of the implementation. 
As shown in Figure~\ref{fig:defects_history}, the projected quantities remain at the level of solver tolerance throughout the
computation.

\begin{figure}[htbp]
\centering
\includegraphics[width=.9\textwidth,page=4]{Figures.pdf}
\caption{Projected auxiliary and constitutive defects during the transient
experiment.
Both projected defects remain at the level of solver tolerance over the whole
time interval. This confirms projected algebraic consistency of the implemented
auxiliary and constitutive equations.}
\label{fig:defects_history}
\end{figure}

The snapshots in Figure~\ref{fig:fourfield-auxiliary-snapshots} illustrate the
spinodal-decomposition dynamics and the behavior of the auxiliary variables.
Starting from a small random perturbation of the nearly homogeneous state, the
phase field \(c_h\) quickly separates into positive and negative regions.  At
early times, the pattern is fine and irregular, while at later times the
interfaces coarsen and larger domains emerge.  The chemical potential \(\mu_h\)
is comparatively smoother and reflects the nonlocal driving force behind this
coarsening process.
The auxiliary variables visualize the recovered Laplacian contribution in the
constitutive relation.  The scalar field \(p_h\) develops pronounced signed
structures near the diffuse interfaces and is largest where the interfacial
curvature and the recovered second-order contribution are most pronounced.  The
auxiliary vector potential \(\bm u_h\), shown by its magnitude together with
arrows, is likewise concentrated around the evolving transition layers.  Thus
the auxiliary block does not change the phase-field evolution, but provides a
spatially resolved representation of the elliptic contribution
\(-\Delta c_h\) through the scalar trace
\(2p_h+\operatorname{div}\bm u_h\).

\begin{figure}[htbp]
\centering
\includegraphics[width=.99\textwidth,page=3]{Figures.pdf}
\caption{Snapshots from the transient four-field Cahn--Hilliard experiment.
The rows show the phase field $c_h$, the chemical potential $\mu_h$, the scalar auxiliary variable $p_h$ and the
auxiliary vector potential $\bm u_h$ through its magnitude with arrows.}
\label{fig:fourfield-auxiliary-snapshots}
\end{figure}

\section{Concluding remarks}\label{sec:conclusion}

The present paper employs a
Rafetseder--Zulehner-type ansatz by using an explicit and rigorous auxiliary mixed
block for the scalar Laplacian contribution in a time-discrete Cahn--Hilliard
step.
The auxiliary problem becomes a mixed
second-order system on the concrete spaces
\(Q_0=L^2_0(\Omega)\) and
\(\bm U=[H^1_0(\Omega)]^2
\).
The resulting four-field formulation is structural in nature.
It preserves the classical mass-conserving evolution equation, represents the
scalar Laplacian contribution by an explicit auxiliary block, and uses only
standard conforming \(C^0\) finite element spaces at the discrete level.
On the natural regularity class \(c\in H^2_N(\Omega)\), the continuous
four-field system is equivalent to the classical convex-splitting mixed time
step.
At the discrete level, the inclusion $V_{h,0}\subset Q_h$ ensures equivalence
between the four-field discretization and the standard conforming mixed
convex-splitting step. The phase-field estimate is the corresponding one-step
conforming estimate, while the auxiliary variables satisfy a stable
auxiliary-block estimate containing the explicit weak-Laplacian recovery defect.
The numerical tests show the expected manufactured
convergence behavior, mass conservation, energy decay, and projected auxiliary
and constitutive consistency.

\bibliographystyle{abbrv}
\small \setlength{\bibsep}{0.4pt}
\bibliography{refs}

\end{document}